\tikzstyle{none}=[inner sep=0pt]
\newcommand{\bR}{\mathbb R}
\newcommand{\bC}{\mathbb C}
\newcommand{\D}{\mathscr D}
\newcommand{\E}{\mathcal E}
\newcommand{\bZ}{\mathbb Z}
\newcommand{\ep}{\varepsilon}
\newcommand{\si}{\sigma}
\newcommand{\Si}{\Sigma}
\newcommand{\Ga}{\Gamma}
\newcommand{\sign}{\rm sgn}
\newcommand{\x}{\times}
\newcommand{\co}{\thinspace\colon}
\newcommand{\Mat}{{\rm Mat}_{n}(\bC)}
\newcommand*{\leftcrossaxis}{%
	\raisebox{-5pt}{%
		\begin{tikzpicture}[scale=0.3, every path/.style ={very thick}, 
		every node/.style={knot crossing, inner sep = 2pt}]
		\node (l1) at (-1,-1) {};
		\node (l2) at (-1,1) {};
		\node (r1) at (1,-1) {};
		\node (r2) at (1,1) {};
		\node (m) at (0,0) {};
		\draw (l1.center) -- (m) {};
		\draw (m) -- (r2.center) {};
		\draw (l2.center) -- (r1.center);
		\draw[densely dashed, red] (0,-1) -- (0,1); 
		\end{tikzpicture}}
}
\newcommand*{\rightcrossaxis}{%
	\raisebox{-5pt}{%
		\begin{tikzpicture}[scale=0.3, every path/.style ={very thick}, 
		every node/.style={knot crossing, inner sep = 2pt}]
		\node (l1) at (-1,-1) {};
		\node (l2) at (-1,1) {};
		\node (r1) at (1,-1) {};
		\node (r2) at (1,1) {};
		\node (m) at (0,0) {};
		\draw (l1.center) -- (r2.center) {};
		\draw (l2.center) -- (m) {};
		\draw (m) -- (r1.center);
		\draw[densely dashed, red] (0,-1) -- (0,1){}; 
		\end{tikzpicture}}
}
\newcommand*{\horresaxis}{%
	\raisebox{-5pt}{%
		\begin{tikzpicture}[scale=0.3, every path/.style ={very thick}, 
		every node/.style={knot crossing, inner sep = 2pt}]
		\node (l1) at (-1,-1) {};
		\node (l2) at (-1,1) {};
		\node (r1) at (1,-1) {};
		\node (r2) at (1,1) {};
		\node (m) at (0,0) {};
		\draw (l2.center) .. controls (l2.2 south east) and (r2.2 south west) .. (r2.center) {};
		\draw (l1.center) .. controls (l1.2 north east) and (r1.2 north west) .. (r1.center) {};
		\draw[densely dashed, red] (0,-1) -- (0,1){}; 
		\end{tikzpicture}}
}
\newcommand*{\vertresaxis}{%
	\raisebox{-5pt}{%
		\begin{tikzpicture}[scale=0.3, every path/.style ={very thick}, 
		every node/.style={knot crossing, inner sep = 2pt}]
		\node (l1) at (-1,-1) {};
		\node (l2) at (-1,1) {};
		\node (r1) at (1,-1) {};
		\node (r2) at (1,1) {};
		\node (m) at (0,0) {};
		\draw (l2.center) .. controls (l2.2 south east) and (l1.2 north east) .. (l1.center) {};
		\draw (r2.center) .. controls (r2.2 south west) and (r1.2 north west) .. (r1.center) {};
		\draw[densely dashed, red] (0,-1) -- (0,1){}; 
		\end{tikzpicture}}
}
\newtheorem{thm}{Theorem}[section]
\newtheorem{lemma}[thm]{Lemma}
\newtheorem{cor}[thm]{Corollary}
\newtheorem{prop}[thm]{Proposition}
\newcommand*\sm[1]{\left(\begin{smallmatrix}#1\end{smallmatrix}\right)}
\theoremstyle{definition}
\newtheorem{defn}[thm]{Definition}
\newtheorem{defns}[thm]{Definitions}
\numberwithin{equation}{section}
\begin{document}

\title{On symmetric equivalence of symmetric union diagrams}

\author{Carlo Collari}
\address{Alfredi R\'enyi Institute of Mathematics, Budapest, Hungary}
\email{carlo.collari.math@gmail.com}
\author{Paolo Lisca}
\address{Department of Mathematics, University of Pisa, ITALY} 
\email{paolo.lisca@unipi.it}
\subjclass[2010]{57M27 (57M25)}
\begin{abstract} 
Eisermann and Lamm introduced a notion of symmetric equivalence among symmetric union diagrams and studied it using a refined form of the Jones polynomial. We introduced invariants of symmetric equivalence via refined versions of topological spin models and provided a partial answer to a question left open by Eisermann and Lamm. In this paper we adopt a new approach to the symmetric equivalence problem and give a complete answer to the original question left open by Eisermann and Lamm.
\end{abstract}

\maketitle

\section{Introduction}\label{s:intro}
Eisermann and Lamm introduced a notion of symmetric equivalence among symmetric union diagrams and defined a Laurent polynomial invariant under symmetric equivalence~\cite{Ei.La11}. The authors of the present paper tackled the problem of symmetric equivalence by considering a stronger version of symmetric equivalence and using topological spin models to define invariants for both types of equivalence~\cite{CL}. Here we introduce a different approach to study symmetric equivalence and, as an application, we resolve a question left open in both~\cite{Ei.La11} and~\cite{CL}. In Subsections~\ref{ss:sdse}, \ref{ss:rJp} and~\ref{ss:refspinmodels} we collect the necessary background material and in Subsection~\ref{ss:results} we state our results. 

\subsection{Symmetric diagrams and symmetric equivalences}\label{ss:sdse}

The involution of the real two-plane $\rho\co\bR^2\to\bR^2$ given by $\rho(x,y)=(-x,y)$ 
fixes the \emph{axis} $\ell=\{0\}\x\bR\subset\bR^2$ pointwise. We declare  
two diagrams $D,D'\subset\bR^2$ to be identical if one is sent to the other by an orientation-preserving diffeomorphism $h\co\bR^2\to\bR^2$ such that $h\circ\rho=\rho\circ h$ and $h(D)=D'$. An oriented link diagram $D\subset\bR^2$ is~\emph{symmetric} if $\rho(D)$ 
is obtained from $D$ by changing the orientation and switching all the crossings on the axis. 
A symmetric diagram $D$ is a~\emph{symmetric union} if $\rho$ sends each component $\widehat{D}$ of $D$ to itself in an orientation-reversing fashion, implying that $\widehat{D}$ crosses the 
axis perpendicularly in exactly two non--crossing points. Figure~\ref{f:1042} illustrates the symmetric union diagrams $D_4$ and $D'_4$, first considered by Eisermann and Lamm~\cite{Ei.La11}. 
\begin{figure}[ht]
\begin{tikzpicture}[scale = 0.5, every path/.style ={very thick}, 
every node/.style={knot crossing, inner sep = 2pt}] 
\node (l1) at (-1.5,-2.3){};
\node (l2) at (-1,-1){};
\node (l3) at (-1,1){};
\node (l4) at (-1,3){};
\node (l5) at (-2,4){};
\node (l6) at (-1,5){};
\node (r1) at (1.5,-2.3){};
\node (r2) at (1,-1){};
\node (r3) at (1,1){};
\node (r4) at (1,3){};
\node (r5) at (2,4){};
\node (r6) at (1,5){};
\node (m1) at (0,-2){};
\node (m2) at (0,0){};
\node (m3) at (0,2){};
\node (m4) at (0,4){};
\draw (l1.center) .. controls (l1.2 east) and (m1.2 south west) .. (m1){};
\draw (l1.center) .. controls (l1.8 west) and (l5.8 south west) .. (l5){};
\draw (l1) .. controls (l1.4 south) and (r1.4 south) .. (r1){};
\draw (l1) .. controls (l1.2 north) and (l2.2 south west) .. (l2.center){};
\draw (r1.center) .. controls (r1.2 west) and (m1.2 south east) .. (m1.center){};
\draw (r1.center) .. controls (r1.8 east) and (r5.8 south east) .. (r5){};
\draw (r1) .. controls (r1.2 north) and (r2.2 south east) .. (r2.center){};
\draw (m1.center) .. controls (m1.2 north west) and (l2.2 south east) .. (l2){};
\draw (m1) .. controls (m1.2 north east) and (r2.2 south west) .. (r2){};
\draw (l2) .. controls (l2.4 north west) and (l3.4 south west) .. (l3.center){};
\draw (l2.center) .. controls (l2.2 north east) and (m2.2 south west) .. (m2.center){};
\draw (m2) .. controls (m2.2 north west) and (l3.2 south east) .. (l3){};
\draw (m2.center) .. controls (m2.2 north east) and (r3.2 south west) .. (r3){};
\draw (r2.center) .. controls (r2.2 north west) and (m2.2 south east) .. (m2){};
\draw (r2) .. controls (r2.4 north east) and (r3.4 south east) .. (r3.center){};
\draw (l3.center) .. controls (l3.2 north east) and (m3.2 south west) .. (m3){};
\draw (r3.center) .. controls (r3.2 north west) and (m3.2 south east) .. (m3.center){};
\draw (l3) .. controls (l3.4 north west) and (l4.4 south west) .. (l4.center){};
\draw (r3) .. controls (r3.4 north east) and (r4.4 south east) .. (r4.center){};
\draw (l4) .. controls (l4.2 north west) and (l5.2 south east) .. (l5.center){};
\draw (r4) .. controls (r4.2 north east) and (r5.2 south west) .. (r5.center){};
\draw (l4.center) .. controls (l4.2 north east) and (m4.2 south west) .. (m4.center){};
\draw (r4.center) .. controls (r4.2 north west) and (m4.2 south east) .. (m4){};
\draw (l4) .. controls (l4.2 south east) and (m3.2 north west) .. (m3.center){};
\draw (r4) .. controls (r4.2 south west) and (m3.2 north east) .. (m3){};
\draw (l5) .. controls (l5.2 north east) and (l6.2 south west) .. (l6.center){};
\draw (r5) .. controls (r5.2 north west) and (r6.2 south east) .. (r6.center){};
\draw (l5.center) .. controls (l5.4 north west) and (l6.4 north west) .. (l6){};
\draw (r5.center) .. controls (r5.4 north east) and (r6.4 north east) .. (r6){};
\draw (m4) .. controls (m4.2 north west) and (l6.2 south east) .. (l6){};
\draw (m4.center) .. controls (m4.2 north east) and (r6.2 south west) .. (r6){};
\draw (l6.center) .. controls (l6.4 north east) and (r6.4 north west) .. (r6.center){};
\draw [dashed, red] (0,-3.5) -- (0,6){};
\begin{scope}[xshift=11cm, yshift=1.5cm]
\node (l1) at (-1,-3.2){};
\node (l2) at (-1,-1.7){};
\node (l3) at (-1,0){};
\node (l4) at (-1,1.5){};
\node (l5) at (-2,2){};
\node (l6) at (-1,3){};
\node (m1) at (0,-1){};
\node (m2) at (0,1){};
\node (r1) at (1,-3.2){};
\node (r2) at (1,-1.7){};
\node (r4) at (1,1.5){};
\node (r3) at (1,0){};
\node (r5) at (2,2){};
\node (r6) at (1,3){};
\draw (l1) .. controls (l1.4 south east) and (r1.4 south west) .. (r1){};
\draw (l1.center) .. controls (l1.2 north east) and (l2.2 south east) .. (l2){};
\draw (r1.center) .. controls (r1.2 north west) and (r2.2 south west) .. (r2){};
\draw (l1) .. controls (l1.2 north west) and (l2.2 south west) ..  (l2.center){};
\draw (r1) .. controls (r1.2 north east) and (r2.2 south east) ..(r2.center){};
\draw (l2.center) .. controls (l2.2 north east) and (m1.2 south west) .. (m1.center){};
\draw (r2.center) .. controls (r2.2 north west) and (m1.2 south east) .. (m1){};
\draw (l2) .. controls (l2.2 north west) and (l3.2 south west) ..  (l3.center){};
\draw (r2) .. controls (r2.2 north east) and (r3.2 south east) ..(r3.center){};
\draw (l3) .. controls (l3.2 south east) and (m1.2 north west) .. (m1){};
\draw (r3) .. controls (r3.2 south west) and (m1.2 north east) .. (m1.center){};
\draw (l3) .. controls (l3.2 north west) and (l4.2 south west) .. (l4.center){};
\draw (r3) .. controls (r3.2 north east) and (r4.2 south east) .. (r4.center){};
\draw (l3.center) .. controls (l3.2 north east) and (m2.2 south west) .. (m2){};
\draw (r3.center) .. controls (r3.2 north west) and (m2.2 south east) .. (m2.center){};
\draw (l1.center) .. controls (l1.8 south west) and (l5.8 south west) .. (l5){};
\draw (r1.center) .. controls (r1.8 south east) and (r5.8 south east) .. (r5){};
\draw (l5.center) .. controls (l5.2 south east) and (l4.2 west) .. (l4){};
\draw (l4) .. controls (l4.2 east) and (m2.2 north west) .. (m2.center){};
\draw (r5.center) .. controls (r5.2 south west) and (r4.2 east) .. (r4){};
\draw (r4) .. controls (r4.2 west) and (m2.2 north east) .. (m2){};
\draw (l5) .. controls (l5.2 north east) and (l6.2 south west) .. (l6.center){};
\draw (r5) .. controls (r5.2 north west) and (r6.2 south east) .. (r6.center){};
\draw (l5.center) .. controls (l5.4 north west) and (l6.4 north west) .. (l6){};
\draw (r5.center) .. controls (r5.4 north east) and (r6.4 north east) .. (r6){};
\draw (l4.center) .. controls (l4.2 north east) and (l6.2 south east) .. (l6){};
\draw (r4.center) .. controls (r4.2 north west) and (r6.2 south west) .. (r6){};
\draw (l6.center) .. controls (l6.4 north east) and (r6.4 north west) .. (r6.center){};
\draw [dashed, red] (0,-4.3) -- (0,4){};
\end{scope}
\end{tikzpicture}
\caption{The symmetric union diagrams $D_4$ (left) and $D^\prime_4$ (right)}
\label{f:1042}
\end{figure}
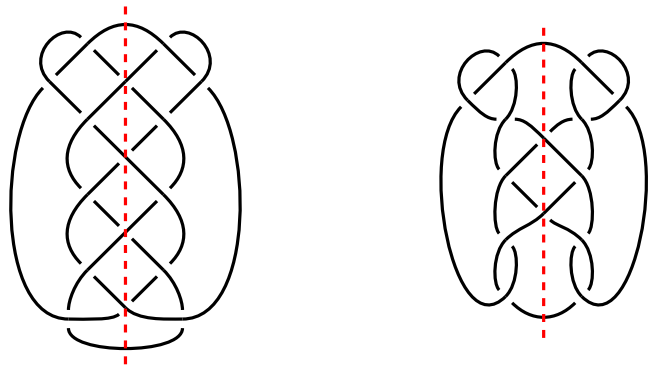
Following~\cite{Ei.La11}, we define a~\emph{symmetric Reidemeister move off the axis} as an ordinary Reidemeister move on a symmetric diagram carried out, away from the axis $\ell$, together with its mirror-symmetric counterpart with respect to $\ell$. A~\emph{symmetric Reidemeister move on the axis} is one of the moves illustrated in Figure~\ref{f:sRm}. 
Eisermann and Lamm consider also two extra moves S1($\pm$) and S2(v), some of which are illustrated in Figure~\ref{f:extrasRm}. 
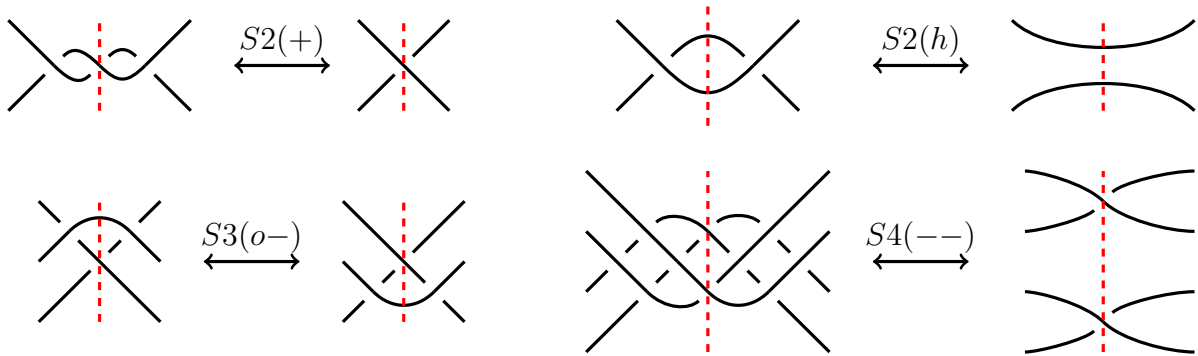
\begin{figure}[ht]
\centering
\begin{tikzpicture}[scale=0.4, every path/.style ={very thick}, 
every node/.style={knot crossing, inner sep = 3pt}] 
\node (l1) at (-2,-2) {};
\node (l2) at (-2,0) {};
\node (l3) at (-1,1) {};
\node (l4) at (-2,2) {};
\node (r1) at (2,-2) {};
\node (r2) at (2,0) {};
\node (r3) at (1,1) {};
\node (r4) at (2,2) {};
\node (m) at (0,0) {};
\draw (l1.center) -- (m) {};
\draw (m) -- (r3) {};
\draw (l3) -- (r1.center);
\draw (l4.center) -- (l3) {};
\draw (r3) -- (r4.center) {};
\draw (l2.center) .. controls (l2.2 north east) and (l3.2 south west) .. (l3.center);
\draw (l3.center) .. controls (l3.2 north east) and (r3.2 north west) .. (r3.center);
\draw (r3.center) .. controls (r3.2 south east) and (r2.2 north west) .. (r2.center);
\draw[dashed, red] (0,-2) -- (0,2); 
\begin{scope}[xshift=5cm]
\node (l) at (-2,0) {};
\node (r) at (2,0) {};
\node at (0,0.8) {$S3(o-)$};
\draw[<->] (l) -- (r);
\end{scope}
\begin{scope}[xshift=10cm]
\node (l1) at (-2,-2) {};
\node (l2) at (-1,-1) {};
\node (l3) at (-2,0) {};
\node (l4) at (-2,2) {};
\node (r1) at (2,-2) {};
\node (r2) at (1,-1) {};
\node (r3) at (2,0) {};
\node (r4) at (2,2) {};
\node (m) at (0,0) {};
\draw (l1.center) -- (l2) {};
\draw (m) -- (r4.center) {};
\draw (r2) -- (r1.center);
\draw (l2) -- (m) {};
\draw (l4.center) -- (r2) {};
\draw (l3.center) .. controls (l3.2 south east) and (l2.2 north west) .. (l2.center);
\draw (l2.center) .. controls (l2.2 south east) and (r2.2 south west) .. (r2.center);
\draw (r2.center) .. controls (r2.2 north east) and (r3.2 south west) .. (r3.center);
\draw[dashed, red] (0,-2) -- (0,2); 	
\end{scope}
\begin{scope}[yshift=5cm]
\node (l1) at (-3,0) {};
\node (l2) at (-1.5,1.5) {};
\node (l3) at (-3,3) {};
\node (m) at (0,1.5) {};
\node (r1) at (3,0) {};
\node (r2) at (1.5,1.5) {};
\node (r3) at (3,3) {};
\draw (l1.center) .. controls (l1.2 north east) and (l2.2 south west) .. (l2);
\draw (l2.center) .. controls (l2.2 north west) and (l3.2 south east) .. (l3.center);
\draw (r1.center) .. controls (r1.2 north west) and (r2.2 south east) .. (r2);
\draw (r2.center) .. controls (r2.2 north east) and (r3.2 south west) .. (r3.center);
\draw (l2.center) .. controls (l2.2 south east) and (m.2 south west) .. (m);
\draw (l2) .. controls (l2.2 north east) and (m.2 north west) .. (m.center);
\draw (r2.center) .. controls (r2.2 south west) and (m.2 south east) .. (m.center);
\draw (r2) .. controls (r2.2 north west) and (m.2 north east) .. (m);
\draw[dashed, red] (0,0) -- (0,3); 
\end{scope}
\begin{scope}[xshift=6cm,yshift=5cm]
\node (l) at (-2,1.5) {};
\node (r) at (2,1.5) {};
\node at (0,2.3) {$S2(+)$};
\draw[<->] (l) -- (r);
\end{scope}
\begin{scope}[xshift=10cm,yshift=5cm]
\node (l1) at (-1.5,0) {};
\node (l3) at (-1.5,3) {};
\node (m) at (0,1.5) {};
\node (r1) at (1.5,0) {};
\node (r3) at (1.5,3) {};
\draw (l1.center) -- (m);
\draw (l3.center) -- (r1.center);
\draw (m) -- (r3.center);
\draw[dashed, red] (0,0) -- (0,3); 
\end{scope}
\begin{scope}[xshift=20cm]
\node (l4-3) at (-4,-3) {};
\node (l4-1) at (-4,-1) {};
\node (l41) at (-4,1) {};
\node (l43) at (-4,3) {};
\node (l30) at (-3,0) {};
\node (l2-1) at (-2,-1) {};
\node (l21) at (-2,1) {};
\node (l10) at (-1,0) {};
\node (m-2) at (0,-1) {};
\node (m2) at (0,1) {};
\node (r4-3) at (4,-3) {};
\node (r4-1) at (4,-1) {};
\node (r41) at (4,1) {};
\node (r43) at (4,3) {};
\node (r30) at (3,0) {};
\node (r2-1) at (2,-1) {};
\node (r21) at (2,1) {};
\node (r10) at (1,0) {};
\draw (l4-3.center) -- (l2-1){};
\draw (l4-1.center) -- (l30){};
\draw (l41.center) -- (l2-1.center){};
\draw (l43.center) -- (l10.center){};
\draw (l30) -- (l21){};
\draw (l2-1) -- (l10){};
\draw (l2-1.center) .. controls (l2-1.2 south east) and (m-2.2 south west) .. (m-2){};
\draw (l10.center) .. controls (l10.2 south east) and (m-2.2 north west) .. (m-2.center){};
\draw (l21) .. controls (l21.2 north east) and (m2.2 north west) .. (m2.center){};
\draw (l10) .. controls (l10.2 north east) and (m2.2 south west) .. (m2){};
\draw (m2.center) .. controls (m2.2 south east) and (r10.2 north west) .. (r10){};
\draw (m2) .. controls (m2.2 north east) and (r21.2 north west) .. (r21){};
\draw (m-2.center) .. controls (m-2.2 south east) and (r2-1.2 south west) .. (r2-1.center){};
\draw (m-2) .. controls (m-2.2 north east) and (r10.2 south west) .. (r10.center) {};
\draw (r10.center) -- (r43.center){};
\draw (r10) -- (r2-1){};
\draw (r2-1.center) -- (r41.center){};
\draw (r21) -- (r30){};
\draw (r30) -- (r4-1.center){};
\draw (r2-1) -- (r4-3.center){};
\draw[dashed, red] (0,-3) -- (0,3){};
\end{scope}
\begin{scope}[xshift=27cm]
\node (l) at (-2,0) {};
\node (r) at (2,0) {};
\node at (0,0.8) {$S4(--)$};
\draw[<->] (l) -- (r);
\end{scope}
\begin{scope}[xshift=33cm]
\node (l3-3) at (-3,-3){};
\node (l3-1) at (-3,-1){};
\node (l31) at (-3,1){};
\node (l33) at (-3,3){};
\node (m-2) at (0,-2){};
\node (m2) at (0,2){};
\node (r3-3) at (3,-3){};
\node (r3-1) at (3,-1){};
\node (r31) at (3,1){};
\node (r33) at (3,3){};
\draw (l33) .. controls (l33.2 east) and (m2.2 north west) .. (m2.center){}; 
\draw (l31) .. controls (l31.2 east) and (m2.2 south west) .. (m2){}; 
\draw (l3-1) .. controls (l3-1.2 east) and (m-2.2 north west) .. (m-2.center){}; 
\draw (l3-3) .. controls (l3-3.2 east) and (m-2.2 south west) .. (m-2){}; 
\draw (m-2.center) .. controls (m-2.2 south east) and (r3-3.2 west) .. (r3-3.center){};
\draw (m-2) .. controls (m-2.2 north east) and (r3-1.2 west) .. (r3-1.center){};
\draw (m2.center) .. controls (m2.2 south east) and (r31.2 west) .. (r31.center){};
\draw (m2) .. controls (m2.2 north east) and (r33.2 west) .. (r33.center){};
\draw[dashed, red] (0,-3) -- (0,3);
\end{scope}
\begin{scope}[xshift=20cm,yshift=5cm]
\node (l1) at (-3,0) {};
\node (l2) at (-1.5,1.5) {};
\node (l3) at (-3,3) {};
\node (r1) at (3,0) {};
\node (r2) at (1.5,1.5) {};
\node (r3) at (3,3) {};
\draw (l1.center) .. controls (l1.2 north east) and (l2.2 south west) .. (l2);
\draw (l2.center) .. controls (l2.2 north west) and (l3.2 south east) .. (l3.center);
\draw (r1.center) .. controls (r1.2 north west) and (r2.2 south east) .. (r2);
\draw (r2.center) .. controls (r2.2 north east) and (r3.2 south west) .. (r3.center);
\draw (l2.center) .. controls (l2.4 south east) and (r2.4 south west) .. (r2.center);
\draw (l2) .. controls (l2.4 north east) and (r2.4 north west) .. (r2);
\draw[dashed, red] (0,-0.5) -- (0,3.5); 
\end{scope}
\begin{scope}[xshift=27cm,yshift=5cm]
\node (l) at (-2,1.5) {};
\node (r) at (2,1.5) {};
\node at (0,2.3) {$S2(h)$};
\draw[<->] (l) -- (r);
\end{scope}
\begin{scope}[xshift=33cm,yshift=5cm]
\node (l1) at (-3,0) {};
\node (l3) at (-3,3) {};
\node (r1) at (3,0) {};
\node (r3) at (3,3) {};
\draw (l1.center) .. controls (l1.4 north east) and (r1.4 north west) .. (r1.center);
\draw (l3.center) .. controls (l3.4 south east) and (r3.4 south west) .. (r3.center);
\draw[dashed, red] (0,0) -- (0,3); 
\end{scope}
\end{tikzpicture}
\caption{Symmetric Reidemeister moves on the axis}
\label{f:sRm}
\end{figure} 
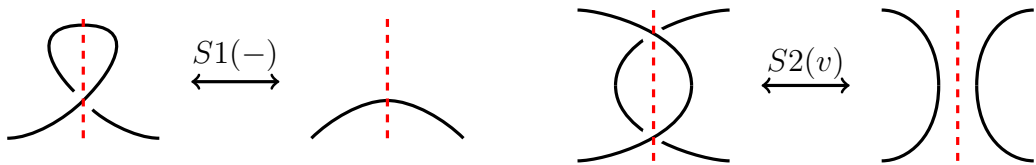
\begin{figure}[ht]
	\centering
	\pgfdeclarelayer{layer1}
\pgfdeclarelayer{layer2}
\pgfdeclarelayer{layer3}
\pgfsetlayers{layer1,layer2,layer3}
\begin{tikzpicture}[scale=0.5, every path/.style ={very thick}, 
every node/.style={knot crossing, inner sep = 3pt}] 
\begin{pgfonlayer}{layer1}
\begin{scope}[yshift=0.6cm]
\node (l1) at (-2,0) {};
\node (r1) at (2,0) {};
\node (c1) at (0,1) {};
\node (c2) at (0,3) {};
\draw (l1.center) .. controls (l1.2 east) and (c1.2 south west) .. (c1.center);
\draw (c1.center) .. controls (c1.4 north east) and (c2.4 east) .. (c2.center);
\draw (c2.center) .. controls (c2.4 west) and (c1.4 north west) .. (c1);
\draw (c1) .. controls (c1.2 south east) and (r1.2 west) .. (r1.center);
\draw[dashed, red] (0,0) -- (0,3.3); 
\end{scope}
\begin{scope}[xshift=4cm,yshift=0.6cm]
\node (l) at (-1.5,1.5) {};
\node (r) at (1.5,1.5) {};
\node at (0,2.2) {$S1(-)$};
\draw[<->] (l) -- (r);
\end{scope}
\begin{scope}[xshift=8cm,yshift=0.6cm]
\node (l1) at (-2,0) {};
\node (r1) at (2,0) {};
\node (c1) at (0,1) {};
\draw (l1.center) .. controls (l1.2 north east) and (c1.2 west) .. (c1.center);
\draw (c1.center) .. controls (c1.2 east) and (r1.2 north west) .. (r1.center);
\draw[dashed, red] (0,0) -- (0,3.3); 
\end{scope}
\begin{scope}[xshift=15cm]
\node (l1) at (-2,0) {};
\node (l2) at (-1,2) {};
\node (l3) at (-2,4) {};
\node (r1) at (2,0) {};
\node (r2) at (1,2) {};
\node (r3) at (2,4) {};
\begin{pgfonlayer}{layer2}
\node[circle,color=white,fill=white,radius=5pt] (c1) at (0,0.7) {};
\node[circle,color=white,fill=white,radius=5pt] (c2) at (0,3.3) {};
\end{pgfonlayer}
\begin{pgfonlayer}{layer3}
\draw (l1.center) .. controls (l1.2 east) and (r2.4 south) .. (r2.center);
\draw (r2.center) .. controls (r2.4 north) and (l3.2 east) .. (l3.center);
\draw[dashed, red] (0,0) -- (0,4); 
\end{pgfonlayer}
\draw (r1.center) .. controls (r1.2 west) and (l2.4 south) .. (l2.center);
\draw (l2.center) .. controls (l2.4 north) and (r3.2 west) .. (r3.center);
\end{scope}
\begin{scope}[xshift=19cm]
\node (l) at (-1.5,2) {};
\node (r) at (1.5,2) {};
\node at (0,2.6) {$S2(v)$};
\draw[<->] (l) -- (r);
\end{scope}
\begin{scope}[xshift=23cm]
\node (l1) at (-2,0) {};
\node (l2) at (-0.5,2) {};
\node (l3) at (-2,4) {};
\node (r1) at (2,0) {};
\node (r2) at (0.5,2) {};
\node (r3) at (2,4) {};
\draw (l1.center) .. controls (l1.2 east) and (l2.4 south) .. (l2.center);
\draw (l2.center) .. controls (l2.4 north) and (l3.2 east) .. (l3.center);
\draw (r1.center) .. controls (r1.2 west) and (r2.4 south) .. (r2.center);
\draw (r2.center) .. controls (r2.4 north) and (r3.2 west) .. (r3.center);
\draw[dashed, red] (0,0) -- (0,4); 
\end{scope}
\end{pgfonlayer}
\end{tikzpicture}
	\caption{Moves $S1(-)$ and $S2(v)$}
	\label{f:extrasRm}
\end{figure} 
It is understood all of these moves admit variants obtained by turning the corresponding pictures upside down, mirroring or rotating them around the axis (cf.~\cite[\S 2.3]{Ei.La11}). 

\begin{defns}\label{d:SE}
Two oriented, symmetric diagrams which can be obtained from each other via a finite sequence of symmetric Reidemester moves on and off the axis (sR-moves) and S1-moves will be called~\emph{symmetrically equivalent}. If they can be obtained from each other using sR-moves, S1- and S2(v)-moves, we will say that the diagrams are~\emph{weakly symmetrically equivalent}. 
\end{defns}

\subsection{Eisermann and Lamm's results}\label{ss:rJp}
Eisermann and Lamm~\cite{Ei.La11} showed that there exists an infinite family of pairs $(D_n, D^\prime_n)$ of symmetric union $2$-bridge knot diagrams  such that $D_n$ and $D^\prime_n$ are Reidemeister equivalent but not weakly symmetrically equivalent for $n = 3$ and $n\geq 5$.
$D_4$ and $D'_4$ are the diagrams of Figure~\ref{f:1042}.
Eisermann and Lamm established their result using an invariant of weak symmetric equivalence defined as follows. Let $\vec{\D}$ denote the set of oriented planar link diagrams $D\subset\bR^2$ transverse to the axis $\ell =\{0\}\x\bR$. Let $\bZ(s^{1/2},t^{1/2})$ be the quotient field of the ring of Laurent polynomials with integer coefficients in the variables $s^{1/2}$ and $t^{1/2}$. Eisermann and Lamm~\cite{Ei.La11} define a map $W\co\vec{\D}\to\bZ(s^{1/2},t^{1/2})$ such that $W(D)=W(D')$ if $D$ and $D'$ are weakly symmetrically equivalent. By~\cite[Proposition~5.6]{Ei.La11}, if $D\in\vec{\D}$ represents a link $L$ and has no crossings on the axis then 
\begin{equation}\label{e:nocrossings}
W (D) = \left(\frac{s^{1/2}+s^{-1/2}}{t^{1/2}+t^{-1/2}}\right)^{n-1} V_L(t),
\end{equation} 
where $V_L(t)$ is the Jones-polynomial of the link $L$, normalized so that on the $n$-component unlink it takes the value $(-t^{1/2}-t^{-1/2})^{n-1}$. Moreover, if $D$ has crossings on the axis, then the following skein-like recursion formulas hold: 
\begin{align}
W\left(\rightcrossaxis{}\right) & = -s^{-1/2} W\left(\horresaxis{}\right) - s^{-1} W\left(\vertresaxis\right)\label{e:recursionright}\\
W\left(\leftcrossaxis{}\right) & = -s^{1/2} W\left(\horresaxis{}\right) - s W\left(\vertresaxis\right)\label{e:recursionleft}
\end{align}
It turns out~\cite[Proposition~1.8]{Ei.La11} that when $D$ is a symmetric union knot diagram, then $W(D)$ is an honest Laurent polynomial that we shall call the {\em refined Jones polynomial}. The diagrams $D_4$ and $D'_4$ have the same refined Jones polynomial, so the question of their weak symmetric equivalence was left unanswered in~\cite{Ei.La11}. 

\subsection{Invariants from topological spin models}\label{ss:refspinmodels}
The theory of topological spin models for links in $S^3$ was introduced in~\cite{Jo89}. Here we follow the reformulation used in~\cite{CL}, to which we refer the reader for further details. Fix an integer $n\geq 2$, denote by $\Mat$ the space of square $n\x n$ complex matrices, and let $d\in\{\pm\sqrt{n}\}$. Given a symmetric, complex matrix $W^+\in \Mat$ with non-zero entries, let $W^-\in \Mat$ be the matrix uniquely determined by the equation 
\begin{equation}\label{e:type-II}
W^+\circ W^- = J,
\end{equation}
where $\circ$ is the Hadamard, i.e.~entry-wise, product and $J$ is the all-$1$ matrix.
Define, for each matrix $X\in \Mat$ with non-zero entries and $a,b\in \{ 1,...,n\}$, 
the vector $Y^X_{ab}\in\bC^n$ by setting 
\[
Y^X_{ab}(x) := \frac {X(x,a)}{X(x,b)}\in\bC,\quad x\in \{ 1,...,n\}.  
\]
Then, the pair $M=(W^+,d)$ is a {\em spin model} if the following equations hold:
\begin{equation}\label{e:type-III}
W^+ Y^{W^+}_{ab} = d W^-(a,b) Y^{W^+}_{ab}\quad\text{for every $a,b\in \{ 1,...,n\}$}.
\end{equation} 

The following definition was introduced in~\cite[Remark~1.3]{CL}.

\begin{defn}\label{d:refined-spin-model}
	A {\em Potts-refined spin model} is a triple $\widehat M = (W^+,V^+,d)$ such that: 
	\begin{itemize} 
		\item
		$M=(W^+,d)$ is a spin model; 
		\item
		$V^+ = (-\xi^{-3}) I + \xi (J-I)$, where $\xi$ is one of the four complex numbers such that $d=-\xi^2-\xi^{-2}$.
	\end{itemize}
\end{defn} 

Let $\widehat M=(W^+,V^+,d)$ be a a Potts-refined spin model, $D$ a symmetric union diagram  
and $c$ a chequerboard colouring of $\bR^2\setminus D$. 
Let $\Ga_D$ be the planar, signed medial graph associated to the black regions of $c$. 
Let $\Ga_D^0$, $\Ga_D^1$ be the sets of vertices, respectively edges of $\Ga_D$ and let $N=|\Ga_D^0|$.
Given $e\in\Ga^1$, we denote by $v_e$ and $w_e$ (in any order) the vertices of $e$.
The set $\Ga_D^1$ contains the set $\Ga_\ell^1$ of edges corresponding to crossings on the axis.  
Let $V^- = (-\xi^3)I + \xi^{-1} (J-I)$, and define the {\em partition function} $Z_{\widehat M}(D,c)$ by the formula
\[
Z_{\widehat M}(D,c) := d^{-N} \sum_{\si\co\Ga^0_D\to \{1,...,n\}} 
\prod_{e\in\Ga_\ell^1} V^{s(e)}(\si(v_e),\si(w_e))\prod_{e\in\Ga_D^1\setminus \Ga_\ell^1} W^{s(e)}(\si(v_e),\si(w_e)), 
\]
where $s(e)\in\{+,-\}$ is the sign of the edge $e$, and the 
{\em normalized partition function} $I_{\widehat M}(D,c)$ by 
\[
I_{\widehat M}(D,c) :=  (-\xi)^{p_\ell(D)-n_\ell(D)} Z_{\widehat M}(D,c),
\]
where $p_\ell(D)$ and $n_\ell(D)$ denote, respectively, the numbers of positive and negative 
crossings on the axis. When $D$ is not connected 
$Z_{\widehat M}(D,c)$ and $I_{\widehat M}(D,c)$ are defined as the product of the values of $Z_{\widehat M}$ 
and, respectively, $I_{\widehat M}$ on its connected components with the induced colourings. 
It turns out~\cite{CL} that the complex number $I_{\widehat M}(D,c)$ is independent 
of the choice of $c$, so we can write more simply $I_{\widehat M}(D)$. 
Moreover, by a special case of~\cite[Theorem~1.3]{CL}, if $D$ and $D'$ are oriented, weakly symmetrically equivalent symmetric union diagrams, then
\begin{equation}\label{e:se-invariance}
I_{\widehat M}(D) = I_{\widehat M}(D').
\end{equation}
In~\cite[Subsection~4.2]{CL} we showed that, for a suitable choice of $M$, the invariant $I_{\widehat M}$ 
defined above can distinguish, up to weak symmetric equivalence, infinitely many Reidemeister equivalent 
symmetric union diagrams. We also showed~\cite[Subsection~4.2]{CL} that more general invariants can 
distinguish the diagrams $D_4$ and $D'_4$ up to symmetric equivalence, but we were unable to use  
invariants coming from spin models to rule out that the diagrams $D_4$ and $D'_4$ of Figure~\ref{f:1042} are 
{\em weakly} symmetrically equivalent. 

\subsection{Statements of results}\label{ss:results} 
Given a symmetric union diagram $D$ and an integer $h\in\bZ$, define a new symmetric union diagram $D(h)$ by replacing each crossing on the axis with $|h|$ consecutive crossings, having the same or opposite type depending on the sign of $h$. The precise convention is specified in Figure~\ref{f:D(h)}, where a number $m=\pm h$ inside a box denotes a sequence of $|m|$ consecutive half-twists on the axis, each of sign equal to $\sign(m)$, the sign of $m$. 
\begin{figure}[ht]
	\centering	
	\begin{tikzpicture}[scale=0.55, every path/.style ={very thick}, 
every node/.style={knot crossing, inner sep = 3pt}] 
\begin{scope}[yshift=1cm]
\node at (0,-1) {$D$};
\node (l1) at (-1.5,0) {};
\node (l3) at (-1.5,3) {};
\node (m) at (0,1.5) {};
\node (r1) at (1.5,0) {};
\node (r3) at (1.5,3) {};
\draw (l1.center) -- (r3.center);
\draw (l3.center) -- (m);
\draw (m) -- (r1.center);
\draw[dashed, red] (0,0) -- (0,3); 
\end{scope}
\begin{scope}[xshift=3.5cm, yshift=1cm]
\node (l) at (-1.5,1.5) {};
\node (r) at (1.5,1.5) {};
\draw[->] (l) -- (r);
\end{scope}
\begin{scope}[xshift=7cm, yshift=1cm]
\node at (0,-1) {$D(h)$};
\node (l1) at (-1.5,0) {};
\node (l3) at (-1.5,3) {};
\node (m) at (0,1.5) {};
\node (r1) at (1.5,0) {};
\node (r3) at (1.5,3) {};
\draw (l1.center) -- (m);
\draw (l3.center) -- (r1.center);
\draw (m) -- (r3.center);
\draw[dashed, red] (0,0) -- (0,3); 
\draw[fill=white] (-0.9,0.75) rectangle (0.9,2.25);
\node at (0,1.5) {$h$}; 
\end{scope}
\begin{scope}[xshift=13cm, yshift=1cm]
\node at (0,-1) {$D$};
\node (l1) at (-1.5,0) {};
\node (l3) at (-1.5,3) {};
\node (m) at (0,1.5) {};
\node (r1) at (1.5,0) {};
\node (r3) at (1.5,3) {};
\draw (l1.center) -- (m);
\draw (l3.center) -- (r1.center);
\draw (m) -- (r3.center);
\draw[dashed, red] (0,0) -- (0,3); 
\end{scope}
\begin{scope}[xshift=16.5cm, yshift=1cm]
\node (l) at (-1.5,1.5) {};
\node (r) at (1.5,1.5) {};
\draw[->] (l) -- (r);
\end{scope}
\begin{scope}[xshift=20cm, yshift=1cm]
\node at (0,-1) {$D(h)$};
\node (l1) at (-1.5,0) {};
\node (l3) at (-1.5,3) {};
\node (m) at (0,1.5) {};
\node (r1) at (1.5,0) {};
\node (r3) at (1.5,3) {};
\draw (l1.center) -- (m);
\draw (l3.center) -- (r1.center);
\draw (m) -- (r3.center);
\draw[dashed, red] (0,0) -- (0,3); 
\draw[fill=white] (-0.9,0.75) rectangle (0.9,2.25);
\node at (0,1.5) {$-h$}; 
\end{scope}
\end{tikzpicture}
	\vspace*{0.7cm}
	\caption{Definition of $D(h)$, $h\in\bZ$}
	\label{f:D(h)}
\end{figure}  

The following theorem is established in Section~\ref{s:invariance}. 
\begin{thm}\label{t:invariance}
Let $D$ and $D^\prime$ be two symmetric diagrams. If $D$ and $D^\prime$ are (weakly) symmetrically equivalent, then $D(h)$ and $D^\prime(h)$ are (weakly) symmetrically equivalent for each $h\in \mathbb{Z}$.
\end{thm}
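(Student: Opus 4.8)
The plan is to show that the operation $D\mapsto D(h)$ is compatible with each of the moves generating (weak) symmetric equivalence, so that it descends to equivalence classes. \emph{First}, I would reduce to the case in which $D'$ is obtained from $D$ by a single move: a symmetric Reidemeister move off the axis, one of the symmetric Reidemeister moves on the axis shown in Figure~\ref{f:sRm}, an $S1$-move, or --- only when treating weak equivalence --- an $S2(v)$-move; composing the equivalences produced for the successive moves of a sequence then yields the general statement. The key observation is that $D\mapsto D(h)$ alters $D$ only inside a disjoint union of small discs, one around each crossing lying on $\ell$, replacing that crossing by a vertical twist region of $|h|$ half-twists with the signs dictated by Figure~\ref{f:D(h)}; note in particular that $D(1)=D$.

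\emph{The easy moves.} If $D$ and $D'$ differ by a symmetric Reidemeister move off the axis, its supporting pair of mirror discs can be chosen disjoint from $\ell$, hence disjoint from every disc altered in passing to $D(h)$, so $D(h)$ and $D'(h)$ differ by the very same move. The identical argument disposes of every move on the axis whose supporting disc contains no crossing on $\ell$ --- notably $S2(+)$ and $S2(h)$, whose crossings lie off the axis. Thus the substance of the theorem is concentrated in the moves $S3(o-)$, $S4(--)$, $S1(-)$ and, in the weak case, $S2(v)$, together with all of their upside-down, mirrored and axis-rotated variants, each of which genuinely involves one or more crossings on $\ell$.

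\emph{The essential moves.} For each of these I would prove a local statement: after replacing every crossing on $\ell$ inside the supporting disc by an $|h|$-fold twist region, the two resulting tangles are joined by an explicit sequence of moves of the same type. An $S3$- or $S4$-move slides an arc across a crossing on $\ell$; in the cabled picture the arc is slid across the entire twist region by $|h|$ successive applications of the move, one half-twist at a time, every intermediate diagram being symmetric, the only auxiliary manipulation being the planar isotopy that carries a twist region along $\ell$ and introduces no new crossings. An $S1$-move deletes a loop carrying a single crossing on $\ell$; after cabling the loop carries an $|h|$-fold twist region at its base, which is removed by $|h|$ successive $S1$-moves. An $S2(v)$-move cancels a pair of crossings on $\ell$ of opposite diagrammatic type --- which is exactly why Figure~\ref{f:D(h)} attaches the labels $h$ and $-h$ to the two types --- so the cabled picture displays two adjacent twist regions of opposite sign, which annihilate through $|h|$ successive $S2(v)$-moves. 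The two signs of $h$ are treated together, since changing the sign of $h$ merely mirrors every twist region, and $h=0$ is immediate because $D(0)$ smooths each crossing on $\ell$ into a pair of arcs disjoint from $\ell$, so that all the iterations above collapse to isotopies of diagrams.

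The step I expect to be the main obstacle is the honest, exhaustive verification of these local statements: one must run through each move of Figures~\ref{f:sRm} and~\ref{f:extrasRm} and each of its variants, checking that the prescribed iteration really does carry the cabled source tangle to the cabled target tangle, that every intermediate diagram is a genuine symmetric diagram, and --- crucially for the non-weak half of the statement --- that the cabled $S3$-, $S4$- and $S1$-moves require only symmetric Reidemeister moves (on and off the axis) and $S1$-moves, and never an $S2(v)$-move. Keeping track of the diagrammatic type of the crossings, hence of the signs of the twist regions, under the mirroring and rotation of the pictures is where the bookkeeping will be heaviest.
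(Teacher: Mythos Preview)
Your overall strategy --- reduce to a single generating move and check compatibility with $D\mapsto D(h)$ --- is exactly the paper's, and your treatment of the off-axis moves, $S2(h)$, $S3$, $S1$ and $S2(v)$ matches what the authors do (they too dispose of these by iterating the basic move $|h|$ times). However, there is a genuine gap.

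You misread the move $S2(+)$ (and its sign-reflected variant $S2(-)$): both sides of $S2(\pm)$ carry \emph{one crossing on the axis}, in addition to the symmetric pair of off-axis crossings on the complicated side; $S2(h)$ is the variant with no on-axis crossing. Hence $S2(\pm)$ is not an ``easy'' move. After passing to $D(h)$ the on-axis crossing becomes a twist region of $|h|$ half-twists, and the off-axis pair of crossings must be pushed across that entire region. This is not achieved by $|h|$ repetitions of $S2(\pm)$: the local picture after one $S2(\pm)$ is not set up for the next. The paper isolates this as a generalised move $S2(\pm,n)$ and proves (Lemma~\ref{lemma:S2n}) that it decomposes into $S2$-moves, $S4$-moves and off-axis Reidemeister moves, by an induction whose key step inserts an auxiliary symmetric $R2$ and then uses an $S4$-type exchange.

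A related issue affects your handling of $S4$: this move does not ``slide an arc across a crossing on $\ell$''; it exchanges \emph{two} on-axis crossings through a symmetric pair of off-axis crossings. After cabling one must perform the generalised exchange $S4(\pm h,\pm h)$, and again this is not a simple $|h|$-fold iteration of the basic $S4$ move. The paper proves (Lemma~\ref{lemma:S4mn}) by a double induction that $S4(m,n)$ decomposes into ordinary $S4$-moves together with symmetric $R2$-moves off the axis; the inductive step genuinely needs those extra off-axis $R2$-moves. Your paragraph on ``essential moves'' anticipates neither of these two lemmas, and without them the argument is incomplete.
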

Clearly, if for any integer $h\in\bZ$ the diagrams $D(h)$ and $D'(h)$ can be shown to be (weakly) symmetrically inequivalent, it follows from Theorem~\ref{t:invariance} that $D$ and $D'$ cannot be (weakly) symmetrically equivalent. It is therefore natural to ask whether the weak symmetric equivalence of $D_4$ and $D'_4$ could be decided by showing that $D_4(h)$ and $D'_4(h)$ have different refined Jones polynomials or different Potts-refined spin model invariants. It turns out that this is impossible: in Section~\ref{s:negative} we show that, for any $h\in\bZ$, the diagrams $D_4(h)$ and $D'_4(h)$ have the same refined Jones polynomial and Potts-refined spin model invariants. 
Nevertheless, in Section~\ref{s:applications} we use Theorem~\ref{t:invariance} to prove the following. 
\begin{thm}\label{t:main}
The Reidemeister equivalent symmetric union diagrams $D_4$ and $D'_4$ are not weakly symmetrically equivalent. 
\end{thm}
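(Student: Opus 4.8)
The plan is to apply Theorem~\ref{t:invariance} contrapositively with the single value $h=0$. Note first that every move defining weak symmetric equivalence --- symmetric Reidemeister moves on and off the axis, $S1$-moves, and $S2(v)$-moves --- is a local modification of a diagram realizable by a finite sequence of ordinary Reidemeister moves, so weakly symmetrically equivalent diagrams represent ambient isotopic links in $S^3$. Hence, if $D_4$ and $D_4'$ were weakly symmetrically equivalent, then by Theorem~\ref{t:invariance} so would be $D_4(0)$ and $D_4'(0)$, and in particular these two diagrams would represent the same link. It therefore suffices to prove that $D_4(0)$ and $D_4'(0)$ represent \emph{different} links in $S^3$.

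To this end I would identify both links explicitly. By the definition of $D(h)$, the diagrams $D_4(0)$ and $D_4'(0)$ have no crossings on the axis: each on-axis crossing has been replaced by the trivial two-strand tangle. Tracing through the combinatorics of a symmetric union diagram, this substitution severs the diagram along the axis away from the two poles, so that $D_4(0)$ is the connected sum $J \# \overline{J}$ of the partial knot $J$ of $D_4$ with its mirror image, and likewise $D_4'(0) = J' \# \overline{J'}$ with $J'$ the partial knot of $D_4'$. (Should the relevant trivial tangle turn out, for the specific diagrams of Figure~\ref{f:1042}, to be the other one --- producing a split union or a two-component link instead --- the remainder of the argument is unchanged, invoking the multiplicativity of the Conway polynomial under the appropriate operation.)

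I would then read $J$ and $J'$ off Figure~\ref{f:1042} directly: each is a $2$-bridge knot, hence determined by a fraction computed from the continued fraction of twist regions in the left half of the corresponding diagram. Once it is verified that $J$ is neither $J'$ nor $\overline{J'}$, the uniqueness of the prime decomposition of knots, together with the primeness of $2$-bridge knots, gives $J \# \overline{J} \neq J' \# \overline{J'}$; equivalently, the Alexander polynomials $\Delta_J(t)^2$ and $\Delta_{J'}(t)^2$ of these connected sums differ. Consequently $D_4(0)$ and $D_4'(0)$ are inequivalent links, and by Theorem~\ref{t:invariance} the diagrams $D_4$ and $D_4'$ are not weakly symmetrically equivalent. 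Observe that the finer invariants considered earlier cannot do the job here: since $D_4(0)$ and $D_4'(0)$ have no crossings on the axis, their refined Jones polynomials are computed by~\eqref{e:nocrossings}, and the equality of these polynomials --- which holds by the computations of Section~\ref{s:negative} --- forces $D_4(0)$ and $D_4'(0)$ to have the same number of components and the same ordinary Jones polynomial, so one truly needs a classical invariant such as the Alexander polynomial or the prime decomposition.

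The hardest part will be the explicit bookkeeping inside Figure~\ref{f:1042}: pinning down the on-axis crossings and the two poles in each of the two rather involved diagrams, confirming that the $h=0$ substitution indeed yields the connected sums $J \# \overline{J}$ and $J' \# \overline{J'}$ rather than something with extra components, and computing the $2$-bridge fractions of $J$ and $J'$ accurately enough to be sure they differ even after passing to mirror images. Everything downstream of that identification is a routine application of classical knot invariants combined with Theorem~\ref{t:invariance}.
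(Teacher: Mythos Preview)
Your approach has a fatal gap at the very first step: the diagrams $D_4(0)$ and $D'_4(0)$ are \emph{equal}. This is stated explicitly in the proof of Proposition~\ref{p:W-equality} (``Clearly $D_4(0)=D^\prime_4(0)$''), and it is easy to see why. The operation $D\mapsto D(0)$ replaces every on-axis crossing by the trivial $0$-twist tangle (two parallel vertical strands). Now $D'_4$ is obtained from $D_4$ by deleting the top and bottom on-axis crossings --- in the notation of that proof, $D'_4 = D_4(0,1,0)$ --- so both $D_4$ and $D'_4$ share the same off-axis structure and differ only in how many on-axis crossings they carry. Once all on-axis crossings are erased, nothing remains to distinguish them. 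In particular your proposed partial knots $J$ and $J'$ coincide, so no classical invariant of the underlying link can separate $D_4(0)$ from $D'_4(0)$.

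The paper instead applies Theorem~\ref{t:invariance} with $h=2$. For this value the diagrams $D_4(2)$ and $D'_4(2)$ represent genuinely different knots: Lemma~\ref{l:tnumbs} computes Seifert matrices from explicit Seifert surfaces and shows that the first homology groups of the $3$-fold cyclic branched covers are $(\bZ/7)^4$ and $(\bZ/49)^2$ respectively. (The paper also remarks that the Kauffman polynomial, the colored Jones polynomial, or the second Alexander ideal would do the job.) Note that $h=1$ is useless as well, since $D_4(1)=D_4$ and $D'_4(1)=D'_4$ are Reidemeister equivalent by hypothesis; the nontrivial content is that some $h\geq 2$ breaks the Reidemeister equivalence, and this requires an honest knot-invariant computation on the modified diagrams rather than on anything directly readable from $D_4$ and $D'_4$ themselves.
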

Notice that Theorem~\ref{t:main} resolves the question left open in~\cite{Ei.La11, CL} about the weak symmetric equivalence of the diagrams $D_4$ and $D'_4$. The proof of Theorem~\ref{t:main} is based on the simple fact that if two symmetric union diagrams $D$ and $D'$ are symmetrically equivalent then they are, in particular, Reidemeister equivalent and therefore represent the same link in $S^3$. Thus, to prove  Theorem~\ref{t:main} it suffices to show that the knots $K$ and $K'$, represented respectively by the diagrams $D_4(2)$ and $D'_4(2)$, are distinct. This can be accomplished in a number of ways. We sketch a few, and provide the details of a computation showing that $K$ and $K'$ have different torsion numbers. 

The paper is organized as follows. In Section~\ref{s:invariance} we prove Theorem~\ref{t:invariance}. In Section~\ref{s:negative} we show that $D_4(h)$ and $D_4'(h)$ have the same refined Jones polynomial and 
Potts-refined spin model invariants. In Section~\ref{s:applications} we prove Theorem~\ref{t:main}. 


\section{Proof of Theorem~\ref{t:invariance}}\label{s:invariance}
The following Lemmas~\ref{lemma:S4mn} and~\ref{lemma:S2n} deal with generalizations of, respectively, the $S4$-move and the $S2$-move. The lemmas play a key r\^ole in the proof of Theorem~\ref{t:invariance}.

\begin{lemma}\label{lemma:S4mn}
Suppose that the symmetric diagrams $D$ and $D^\prime$ differ by the~$S4(m, n)$-move defined in Figure \ref{fig:S4generale}. Then, $D$ and $D^\prime$ are connected by a sequence of symmetric Reidemeister moves off the axis and $S4$-moves. In particular, $D$ and $D'$ 
are symmetrically equivalent. 
\end{lemma}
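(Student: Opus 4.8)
The plan is to reduce the $S4(m,n)$-move to a composition of standard $S4$-moves and symmetric Reidemeister moves off the axis by an explicit local manipulation near the axis. Recall that the ordinary $S4$-move (here $S4(--)$, and its mirror/rotated variants) allows one to slide a portion of the diagram through a crossing on the axis at the cost of introducing a mirror-symmetric pair of crossings off the axis; the $S4(m,n)$-move of Figure~\ref{fig:S4generale} should be the analogous move where, instead of a single on-axis crossing, one has a string of $m$ on-axis half-twists on one side being traded for a string of $n$ on-axis half-twists on the other side (with the off-axis part compensating). The key observation is that a block of $|m|$ consecutive on-axis crossings can be peeled off one crossing at a time.

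First I would set up notation carefully for the tangle appearing in Figure~\ref{fig:S4generale}, isolating the two vertical boxes (one labelled $m$, one labelled $n$) and the off-axis arcs connecting them. Then I would argue by induction on $|m|+|n|$. For the base case $|m|+|n|$ small (say $m,n\in\{-1,0,1\}$) the move $S4(m,n)$ is either trivial, a single $S4$-move, or a symmetric Reidemeister move off the axis (an R2 performed symmetrically), possibly combined with an $S1$- or $S2$-type cancellation — but since the statement only claims symmetric equivalence via off-axis moves and $S4$-moves, I must make sure the base case genuinely uses only those; I expect this forces the right normalization of what $S4(m,n)$ means. For the inductive step, I would take one half-twist off the top of the $m$-box: sliding it away from the axis using a symmetric Reidemeister move off the axis turns the configuration with parameters $(m,n)$ into one with parameters $(m\mp 1, n)$ plus a leftover pair of off-axis crossings, together with one application of the ordinary $S4$-move to push that pair past the remaining on-axis block. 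Iterating empties both boxes down to the base case.

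The main obstacle I anticipate is bookkeeping of crossing signs and the precise interaction of the sliding move with the crossings already present in the boxes — in particular, making sure that each elementary step is \emph{exactly} a legal symmetric Reidemeister move off the axis or an $S4$-move (including its upside-down, mirrored, and axis-rotated variants, which Figure~\ref{f:sRm} and the surrounding text explicitly permit), rather than something that only becomes legal after an $S1$- or $S2(v)$-move. Getting the signs right is where the hypothesis on how $D(h)$ is built (Figure~\ref{f:D(h)}, with $|m|$ half-twists each of sign $\sign(m)$) will be used implicitly, since the eventual application in Theorem~\ref{t:invariance} is exactly to blocks of uniformly-signed on-axis crossings. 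Once Lemma~\ref{lemma:S4mn} (and the companion Lemma~\ref{lemma:S2n} for the $S2$-move) is in hand, Theorem~\ref{t:invariance} will follow by checking that each sR-move, $S1$-move, and $S2(v)$-move relating $D$ and $D'$ lifts, after applying $(\cdot)(h)$, to a sequence of such moves relating $D(h)$ and $D'(h)$: most moves lift verbatim, and the only ones that genuinely change — those involving on-axis crossings — are handled precisely by the generalized $S4$- and $S2$-moves of these two lemmas.
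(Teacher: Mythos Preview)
Your high-level strategy --- induct by peeling off one on-axis crossing at a time --- is exactly the paper's approach. But your description of the inductive step has a gap. You write that peeling one half-twist off the $m$-box reduces $S4(m,n)$ to $S4(m\mp 1,n)$ after ``one application of the ordinary $S4$-move to push that pair past the remaining on-axis block.'' The trouble is that the single crossing you have peeled off must be exchanged with the \emph{entire} $n$-block; that is an $S4(1,n)$-move, not a single ordinary $S4$-move. The paper's decomposition (its Figure~\ref{fig:S4ind1}) makes this explicit: $S4(m,k)$ is realized by two symmetric $R2$-moves off the axis, then one $S4(1,k)$-move, then one $S4(m-1,k)$-move. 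Both auxiliary moves have strictly smaller first parameter, so a double induction on $(m,n)$ covers them. Your induction on $|m|+|n|$ can be salvaged, but only if you also reduce in $n$: as written, your step only decreases $|m|$, so it terminates at $(\pm 1,n)$, which is not in your declared base case $m,n\in\{-1,0,1\}$.

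Two minor points. First, your worry that the base case might require an $S1$- or $S2(v)$-move is unnecessary: when $m=0$ or $n=0$ the $S4(m,n)$-move is literally a symmetric pair of $R2$-moves off the axis, and $S4(\pm 1,\pm 1)$ are exactly the ordinary $S4$-moves, so only off-axis moves and $S4$-moves appear. Second, your final paragraph about how the lemma feeds into Theorem~\ref{t:invariance} is correct, but it is not part of the proof of this lemma.
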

\begin{figure}[ht]
\begin{tikzpicture}[thick, scale =.6]

\draw[red, very thick, dashed] (0,3)--(0,-3);

\draw (3.5,-1) .. controls +(-1.5,0) and +(.5,0) .. (.5,2.5) .. controls +(-.5,0) and +(.5,0) .. (-.5,1) .. controls +(-.5,0) and +(1.5,0) .. (-3.5,-2.5);
\draw (3.5,-2.5)  .. controls +(-1.5,0) and +(.5,0) .. (.5,1) .. controls +(-.5,0) and +(.5,0) .. (-.5,2.5) .. controls +(-.5,0) and +(1.5,0) .. (-3.5,-1);
\draw[white, fill] (.6,1.25) rectangle (-.6,2.25);
\draw[] (.6,1.25) rectangle (-.6,2.25);
\node at (0,1.75) {$m$};

\pgfsetlinewidth{5*\pgflinewidth}
\draw[white] (3.5,1).. controls +(-1.5,0) and +(.5,0) .. (.5,-2.5) .. controls +(-.5,0) and +(.5,0) .. (-.5,-1) .. controls +(-.5,0) and +(1.5,0) .. (-3.5,2.5);
\draw[white] (3.5,2.5) .. controls +(-1.5,0) and +(.5,0) ..  (.5,-1) .. controls +(-.5,0) and +(.5,0) .. (-.5,-2.5) .. controls +(-.5,0) and +(1.5,0) .. (-3.5,1);
\pgfsetlinewidth{.2*\pgflinewidth}
\draw (3.5,1).. controls +(-1.5,0) and +(.5,0) .. (.5,-2.5) .. controls +(-.5,0) and +(.5,0) .. (-.5,-1) .. controls +(-.5,0) and +(1.5,0) .. (-3.5,2.5);
\draw (3.5,2.5) .. controls +(-1.5,0) and +(.5,0) ..  (.5,-1) .. controls +(-.5,0) and +(.5,0) .. (-.5,-2.5) .. controls +(-.5,0) and +(1.5,0) .. (-3.5,1);
\draw[white, fill] (.6,-1.25) rectangle (-.6,-2.25);
\draw[] (.6,-1.25) rectangle (-.6,-2.25);
\node at (0,-1.75) {$n$};

\begin{scope}[shift={+(12,0)}]

\draw[red, very thick, dashed] (0,3)--(0,-3);

\draw (3.5,2.5) -- (.5,2.5) .. controls +(-.5,0) and +(.5,0) .. (-.5,1)--(-3.5,1);
\draw (3.5,1) --  (.5,1) .. controls +(-.5,0) and +(.5,0) .. (-.5,2.5)--(-3.5,2.5);
\draw[white, fill] (.6,1.25) rectangle (-.6,2.25);
\draw[] (.6,1.25) rectangle (-.6,2.25);
\node at (0,1.75) {$n$};

\draw (3.5,-2.5) -- (.5,-2.5) .. controls +(-.5,0) and +(.5,0) .. (-.5,-1)--(-3.5,-1);
\draw (3.5,-1) --  (.5,-1) .. controls +(-.5,0) and +(.5,0) .. (-.5,-2.5)--(-3.5,-2.5);
\draw[white, fill] (.6,-1.25) rectangle (-.6,-2.25);
\draw[] (.6,-1.25) rectangle (-.6,-2.25);
\node at (0,-1.75) {$m$};
\end{scope}

\node at (6,.5) {$S4(m, n)$};
\draw[latex-latex] (4.5,0) -- (7.5,0);
\end{tikzpicture}
\caption{Definition of the $S4(m, n)$-move, where $m$ and $n$ are integers denoting the number of crossings on the axis with the appropriate signs.}
\label{fig:S4generale}
\end{figure}
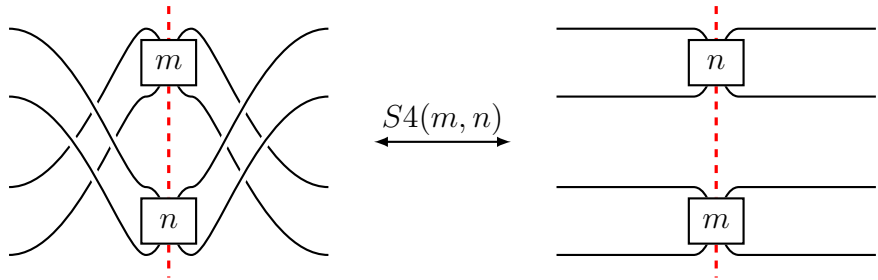

\begin{proof}
Since when $n=0$ or $m=0$ an S4($m$,$n$)-move reduces to a symmetric pair of second Reidemeister moves off the axis, we may assume without loss of generality that $mn \ne 0$. 
We suppose first that $m$ and $n$ are both positive and we establish the statement by induction on $m$ and $n$. The basis of the induction holds because an $S4(1, 1)$-move is just an ordinary $S4$-move. 
Assume that the statement holds for $S4(h, k)$-moves with $1 \leq h < m$ and $1\leq  k < n$. The inductive step is established by proving the statement for $S4(m, k)$-moves and $S4(h, n)$-moves. 
Figure~\ref{fig:S4ind1} shows that an $S4(m, k)$-move can be decomposed into a sequence of symmetric Reidemeister moves and $S4(h,k)$-moves with $1\leq h<m$. 
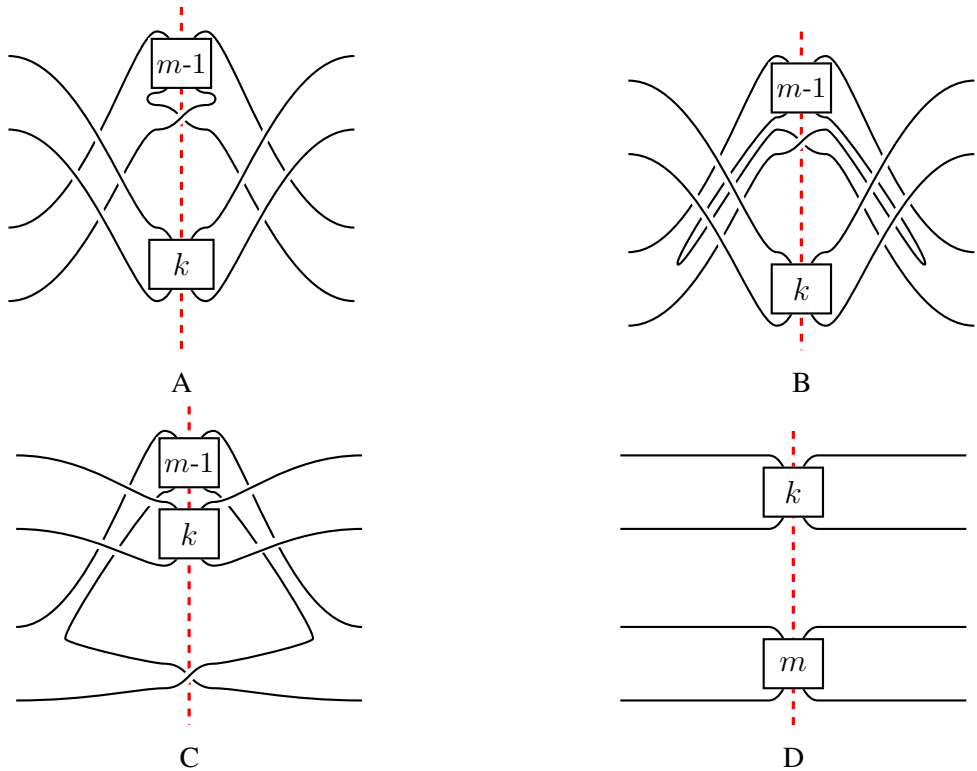
\begin{figure}[ht]
    \centering
    \begin{subfigure}[t]{0.49\textwidth}
    \centering
\begin{tikzpicture}[thick, scale = .65]
\draw[red, very thick, dashed] (0,3.5)--(0,-3.5);

\draw (3.5,-2.5)  .. controls +(-1.5,0) and +(.5,0) .. (.5,1) .. controls +(-.5,0) and +(.5,0) .. (-.5,1.5).. controls +(-.25,0) and +(-.25,0) .. (-.5,1.75) .. controls +(.5,0) and +(.5,0) .. (-.5,3) .. controls +(-.5,0) and +(1.5,0) .. (-3.5,-1);

\pgfsetlinewidth{5*\pgflinewidth}
\draw[white] (3.5,-1) .. controls +(-1.5,0) and +(.5,0) .. (.5,3)  .. controls +(-.5,0) and +(-.5,0) .. (.5,1.75) .. controls +(.25,0) and +(.25,0) .. (.5,1.5)  .. controls +(-.5,0) and +(.5,0) .. (-.5,1) .. controls +(-.5,0) and +(1.5,0) .. (-3.5,-2.5);
\pgfsetlinewidth{.2*\pgflinewidth}

\draw (3.5,-1) .. controls +(-1.5,0) and +(.5,0) .. (.5,3)  .. controls +(-.5,0) and +(-.5,0) .. (.5,1.75) .. controls +(.25,0) and +(.25,0) .. (.5,1.5)  .. controls +(-.5,0) and +(.5,0) .. (-.5,1) .. controls +(-.5,0) and +(1.5,0) .. (-3.5,-2.5);

\draw[white, fill] (.6,1.85) rectangle (-.65,2.85);
\draw[] (.6,1.85) rectangle (-.6,2.85);
\node at (0,2.35) {\small{$m$-$1$}};

\pgfsetlinewidth{5*\pgflinewidth}
\draw[white] (3.5,1).. controls +(-1.5,0) and +(.5,0) .. (.5,-2.5) .. controls +(-.5,0) and +(.5,0) .. (-.5,-1) .. controls +(-.5,0) and +(1.5,0) .. (-3.5,2.5);
\pgfsetlinewidth{.2*\pgflinewidth}
\draw (3.5,1).. controls +(-1.5,0) and +(.5,0) .. (.5,-2.5) .. controls +(-.5,0) and +(.5,0) .. (-.5,-1) .. controls +(-.5,0) and +(1.5,0) .. (-3.5,2.5);
\pgfsetlinewidth{5*\pgflinewidth}
\draw[white] (3.5,2.5) .. controls +(-1.5,0) and +(.5,0) ..  (.5,-1) .. controls +(-.5,0) and +(.5,0) .. (-.5,-2.5) .. controls +(-.5,0) and +(1.5,0) .. (-3.5,1);
\pgfsetlinewidth{.2*\pgflinewidth}
\draw (3.5,2.5) .. controls +(-1.5,0) and +(.5,0) ..  (.5,-1) .. controls +(-.5,0) and +(.5,0) .. (-.5,-2.5) .. controls +(-.5,0) and +(1.5,0) .. (-3.5,1);

\draw[white, fill] (.65,-1.25) rectangle (-.65,-2.25);
\draw[] (.65,-1.25) rectangle (-.65,-2.25);
\node at (0,-1.75) {$k$};
\end{tikzpicture}
   \caption{\hspace{4pt}}\label{fig:S4ind1A}
    \end{subfigure} %
~
\begin{subfigure}[t]{0.49\textwidth}
        \centering
\begin{tikzpicture}[thick, scale =.65]

\draw[red, very thick, dashed] (0,3.5)--(0,-3);

\draw (3.5,-2.5)  .. controls +(-1.5,0) and +(.5,0) .. (.5,1) .. controls +(-.5,0) and +(.5,0) .. (-.5,1.5).. controls +(-.25,0)  and +(.125,-.125) .. (-2.5,-1.25) .. controls +(-.125,.125) and +(-.25,0) .. (-.5,1.75) .. controls +(.5,0) and +(.5,0) .. (-.5,3) .. controls +(-.5,0) and +(1.5,0) .. (-3.5,-1);

\pgfsetlinewidth{5*\pgflinewidth}
\draw[white] (3.5,-1) .. controls +(-1.5,0) and +(.5,0) .. (.5,3)  .. controls +(-.5,0) and +(-.5,0) .. (.5,1.75) .. controls +(.25,0) and +(.25,0) .. (.5,1.5)  .. controls +(-.5,0) and +(.5,0) .. (-.5,1) .. controls +(-.5,0) and +(1.5,0) .. (-3.5,-2.5);
\pgfsetlinewidth{.2*\pgflinewidth}

\draw (3.5,-1) .. controls +(-1.5,0) and +(.5,0) .. (.5,3)  .. controls +(-.5,0) and +(-.5,0) .. (.5,1.75) .. controls +(.25,0) and +(.125,.125) .. (2.5,-1.25) .. controls +(-.125,-.125) and +(.25,0) .. (.5,1.5)  .. controls +(-.5,0) and +(.5,0) .. (-.5,1) .. controls +(-.5,0) and +(1.5,0) .. (-3.5,-2.5);

\draw[white, fill] (.6,1.85) rectangle (-.65,2.85);
\draw[] (.6,1.85) rectangle (-.6,2.85);
\node at (0,2.35) {\small{$m$-$1$}};

\pgfsetlinewidth{5*\pgflinewidth}
\draw[white] (3.5,1).. controls +(-1.5,0) and +(.5,0) .. (.5,-2.5) .. controls +(-.5,0) and +(.5,0) .. (-.5,-1) .. controls +(-.5,0) and +(1.5,0) .. (-3.5,2.5);
\pgfsetlinewidth{.2*\pgflinewidth}
\draw (3.5,1).. controls +(-1.5,0) and +(.5,0) .. (.5,-2.5) .. controls +(-.5,0) and +(.5,0) .. (-.5,-1) .. controls +(-.5,0) and +(1.5,0) .. (-3.5,2.5);
\pgfsetlinewidth{5*\pgflinewidth}
\draw[white] (3.5,2.5) .. controls +(-1.5,0) and +(.5,0) ..  (.5,-1) .. controls +(-.5,0) and +(.5,0) .. (-.5,-2.5) .. controls +(-.5,0) and +(1.5,0) .. (-3.5,1);
\pgfsetlinewidth{.2*\pgflinewidth}
\draw (3.5,2.5) .. controls +(-1.5,0) and +(.5,0) ..  (.5,-1) .. controls +(-.5,0) and +(.5,0) .. (-.5,-2.5) .. controls +(-.5,0) and +(1.5,0) .. (-3.5,1);
\draw[white, fill] (.6,-1.25) rectangle (-.6,-2.25);
\draw[] (.6,-1.25) rectangle (-.6,-2.25);
\node at (0,-1.75) {$k$};

\end{tikzpicture}
   \caption{\hspace{4pt}}\label{fig:S4ind1B}
    \end{subfigure} %

\begin{subfigure}[t]{0.49\textwidth}
        \centering
\begin{tikzpicture}[thick, scale =.65]

\draw[red, very thick, dashed] (0,3.5)--(0,-3);

\draw (3.5,-2.5)  .. controls +(-1.5,0) and +(.5,0) .. (.5,-2.25) .. controls +(-.5,0) and +(.5,0) .. (-.5,-1.75).. controls +(-.25,0)  and +(.125,-.125) .. (-2.5,-1.25) .. controls +(-.125,.125) and +(-.25,0) .. (-.5,1.75) .. controls +(.5,0) and +(.5,0) .. (-.5,3) .. controls +(-.5,0) and +(1.5,0) .. (-3.5,-1);

\pgfsetlinewidth{5*\pgflinewidth}
\draw[white]  (3.5,-1) .. controls +(-1.5,0) and +(.5,0) .. (.5,3)  .. controls +(-.5,0) and +(-.5,0) .. (.5,1.75) .. controls +(.25,0) and +(.125,.125) .. (2.5,-1.25) .. controls +(-.125,-.125) and +(.25,0) .. (.5,-1.75) .. controls +(-.5,0) and +(.5,0) .. (-.5,-2.25)  .. controls +(-.5,0) and +(1.5,0) .. (-3.5,-2.5);
\pgfsetlinewidth{.2*\pgflinewidth}

\draw (3.5,-1) .. controls +(-1.5,0) and +(.5,0) .. (.5,3)  .. controls +(-.5,0) and +(-.5,0) .. (.5,1.75) .. controls +(.25,0) and +(.125,.125) .. (2.5,-1.25) .. controls +(-.125,-.125) and +(.25,0) .. (.5,-1.75) .. controls +(-.5,0) and +(.5,0) .. (-.5,-2.25)  .. controls +(-.5,0) and +(1.5,0) .. (-3.5,-2.5);

\draw[white, fill] (.6,1.85) rectangle (-.65,2.85);
\draw[] (.6,1.85) rectangle (-.6,2.85);
\node at (0,2.35) {\small{$m$-$1$}};
\pgfsetlinewidth{5*\pgflinewidth}
\draw[white] (3.5,1).. controls +(-1.5,0) and +(.5,0) .. (.5,.35)  .. controls +(-.5,0) and +(.5,0) .. (-.5,1.5) .. controls +(-.5,0) and +(1.5,0) .. (-3.5,2.5);
\pgfsetlinewidth{.2*\pgflinewidth}
\draw (3.5,1).. controls +(-1.5,0) and +(.5,0) .. (.5,.25)  .. controls +(-.5,0) and +(.5,0) .. (-.5,1.55) .. controls +(-.5,0) and +(1.5,0) .. (-3.5,2.5);
\pgfsetlinewidth{5*\pgflinewidth}
\draw[white] (3.5,2.5) .. controls +(-1.5,0) and +(.5,0) .. (.5,1.5) .. controls +(-.5,0) and +(.5,0) .. (-.5,.25)  .. controls +(-.5,0) and +(1.5,0) .. (-3.5,1);
\pgfsetlinewidth{.2*\pgflinewidth}
\draw (3.5,2.5) .. controls +(-1.5,0) and +(.5,0) .. (.5,1.55) .. controls +(-.5,0) and +(.5,0) .. (-.5,.25)  .. controls +(-.5,0) and +(1.5,0) .. (-3.5,1);

\draw[white, fill] (.6,.4) rectangle (-.6,1.4);
\draw[] (.6,.4) rectangle (-.6,1.4);
\node at (0,.9) {$k$};
\end{tikzpicture}
 \caption{\hspace{4pt}}\label{fig:S4ind1C}
\end{subfigure}%
~%
\begin{subfigure}[t]{0.49\textwidth}
\centering
\begin{tikzpicture}[thick, scale =.65]

\draw[red, very thick, dashed] (0,3)--(0,-3);

\draw (3.5,2.5) -- (.5,2.5) .. controls +(-.5,0) and +(.5,0) .. (-.5,1)--(-3.5,1);
\draw (3.5,1) --  (.5,1) .. controls +(-.5,0) and +(.5,0) .. (-.5,2.5)--(-3.5,2.5);
\draw[white, fill] (.6,1.25) rectangle (-.6,2.25);
\draw[] (.6,1.25) rectangle (-.6,2.25);
\node at (0,1.75) {$k$};

\draw (3.5,-2.5) -- (.5,-2.5) .. controls +(-.5,0) and +(.5,0) .. (-.5,-1)--(-3.5,-1);
\draw (3.5,-1) --  (.5,-1) .. controls +(-.5,0) and +(.5,0) .. (-.5,-2.5)--(-3.5,-2.5);
\draw[white, fill] (.6,-1.25) rectangle (-.6,-2.25);
\draw[] (.6,-1.25) rectangle (-.6,-2.25);
\node at (0,-1.75) {$m$};
\end{tikzpicture}
 \caption{\hspace{4pt}}\label{fig:S4ind1D}
\end{subfigure}
    \caption{Decomposition of an $S4(m, k)$-move.}\label{fig:S4ind1}
\end{figure}
More precisely, to go from Figure~\ref{fig:S4ind1A} to Figure~\ref{fig:S4ind1B} we use two symmetric second Reidemeister moves off the axis, to go from Figure~\ref{fig:S4ind1B} to Figure~\ref{fig:S4ind1C} one S4$(1,k)$-move and to go from Figure~\ref{fig:S4ind1C} to Figure~\ref{fig:S4ind1D} one $S4(m-1,k)$-move. A similar sequence of moves can be used to prove the inductive step for an $S4(h, n)$-move. 

For the other choices of signs of $m$ and $n$ the argument is essentially the same, except that one needs to perform the double induction on $|m|$ and $|n|$ and modify accordingly Figure~\ref{fig:S4ind1} and its analogue for the $S4(h, n)$-move. The obvious details are left to the reader.
\end{proof}

\begin{lemma}\label{lemma:S2n}
Suppose that the symmetric diagrams $D$ and $D^\prime$ differ by the~$S2(\pm, n)$-move defined in Figure \ref{fig:S2generale}. Then, $D$ and $D^\prime$ are connected by a sequence of symmetric  Reidemeister moves off the axis, $S4$-moves and $\vert n \vert$ $S2$-moves. 
In particular, $D$ and $D^\prime$ are symmetrically equivalent. 
\end{lemma}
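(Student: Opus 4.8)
The plan is to mimic the proof of Lemma~\ref{lemma:S4mn}, running an induction on $|n|$. When $n=0$ the $S2(\pm,0)$-move is just a symmetric pair of second Reidemeister moves off the axis, so there is nothing to prove; when $|n|=1$ the move is, by definition, an ordinary $S2$-move (the $S2(+)$ or $S2(h)$ variant of Figure~\ref{f:sRm}, resp.\ its mirror, according to the sign), which is the base of the induction. Since every move we shall use — symmetric Reidemeister moves off the axis, $S4$-moves, $S2$-moves — is among the sR-moves of Definition~\ref{d:SE}, the final assertion will follow automatically once the factorization is produced.

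For the inductive step, assume first $n\ge 2$ and that the statement holds for $S2(\pm,k)$-moves with $1\le k<n$. I would exhibit a chain of pictures, analogous to Figure~\ref{fig:S4ind1}, showing that an $S2(\pm,n)$-move factors as follows: (i) a pair of symmetric second Reidemeister moves off the axis pulling a single half-twist out of the box of $n$ crossings on the axis; (ii) one ordinary $S2$-move absorbing (or creating) the cup–cap configuration carrying that isolated half-twist; (iii) a generalized $S4(m',k')$-move repositioning the remaining $n-1$ crossings into the standard location of the picture — legitimate because, by Lemma~\ref{lemma:S4mn}, such a move is itself a composition of symmetric Reidemeister moves off the axis and ordinary $S4$-moves; and (iv) an $S2(\pm,n-1)$-move, which by the inductive hypothesis decomposes into symmetric Reidemeister moves off the axis, $S4$-moves and $n-1$ ordinary $S2$-moves. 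Counting, we have used $1+(n-1)=n$ ordinary $S2$-moves, as claimed, and the $n=|n|$ count is the reason the induction must be on $|n|$ rather than on some coarser parameter.

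For negative $n$ the argument is identical after reflecting the relevant pictures (and hence exchanging the $S2(+)$/$S2(h)$ variant with its mirror), and I would leave these routine modifications to the reader, exactly as in Lemma~\ref{lemma:S4mn}. The ``in particular'' then follows since symmetric Reidemeister moves off the axis, $S4$-moves and $S2$-moves are all sR-moves in the sense of Definition~\ref{d:SE}, so the constructed sequence witnesses that $D$ and $D'$ are symmetrically equivalent (not merely weakly so).

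The step I expect to be the main obstacle is (iii): one must arrange the explicit isotopy so that, after pulling off one half-twist and performing the single $S2$-move, the residual $n-1$ crossings on the axis can genuinely be slid back by a generalized $S4(m',k')$-move with the correct crossing signs. Getting the local pictures right and determining the signs of $m'$ and $k'$ requires care, and the bookkeeping of crossing types must be checked against the conventions fixed in Figures~\ref{f:sRm} and~\ref{fig:S4generale}; this is where the proof, like that of Lemma~\ref{lemma:S4mn}, ultimately rests on a careful diagrammatic verification rather than on any conceptual difficulty.
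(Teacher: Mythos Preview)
Your strategy --- induction on $|n|$, peel off one on-axis crossing via a symmetric $R2$, invoke Lemma~\ref{lemma:S4mn} for the generalized $S4$, and spend exactly one ordinary $S2$-move per inductive step --- is precisely the paper's. The only difference is the order: the paper performs the generalized $S4$-move (and a subsequent $R3/R2$ off the axis) and the inductive $S2(\pm,n\pm 1)$ \emph{before} the single ordinary $S2$, not after, and that order is what makes the local picture actually be an $S2$ configuration at the moment you need it; when you draw the pictures you will find that after your step~(i) the isolated half-twist is not yet flanked by a cup--cap, so your steps~(ii) and~(iii)--(iv) should be swapped.
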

\begin{figure}[ht]
\begin{tikzpicture}[thick, scale =.85]

\draw[red,dashed] (0,2) -- (0,-2) ;

\draw (-2,-2) .. controls +(.5,.5) and +(-1.5,1.5) .. (-0.25,.5);
\draw (2,-2) .. controls +(-.5,.5) and +(1.5,1.5) .. (0.25,.5);

\pgfsetlinewidth{5*\pgflinewidth}
\draw[white] (-2,2) .. controls +(.5,-.5) and +(-1.5,-1.5) .. (-0.25,-.5);
\draw[white] (2,2) .. controls +(-.5,-.5) and +(1.5,-1.5) .. (0.25,-.5);
\pgfsetlinewidth{.2*\pgflinewidth}

\draw (-2,2) .. controls +(.5,-.5) and +(-1.5,-1.5) .. (-0.25,-.5);
\draw (2,2) .. controls +(-.5,-.5) and +(1.5,-1.5) .. (0.25,-.5);
\draw[fill, white] (-0.4,.5) rectangle (0.4,-.5);
\draw (-0.4,.5) rectangle (0.4,-.5);

\node at (0,0) {$n$};

\node at (4,.5) {$S2(-,n)$};
\draw[latex-latex] (2.5,0) -- (5.5,0);

\begin{scope}[shift={+(8,0)}]
\draw[red,dashed] (0,2) -- (0,-2) ;

\draw (-2,2) .. controls +(.5,-0.25) and +(0,1) .. (-0.25,.5);
\draw (2,2) .. controls +(-.5,-.25) and +(0,1) .. (0.25,.5);

\draw (-2,-2) .. controls +(.5,0.25) and +(0,-1) .. (-0.25,-.5);
\draw (2,-2) .. controls +(-.5,.25) and +(0,-1) .. (0.25,-.5);
\draw[fill, white] (-0.4,.5) rectangle (0.4,-.5);
\draw (-0.4,.5) rectangle (0.4,-.5);

\node at (0,0) {$n$};

\end{scope}
\end{tikzpicture}
\caption{Definition of the $S2(-,n)$-move. The $S2(+,n)$-move is obtained by switching the crossings off the axis in the above picture.}
\label{fig:S2generale}
\end{figure}
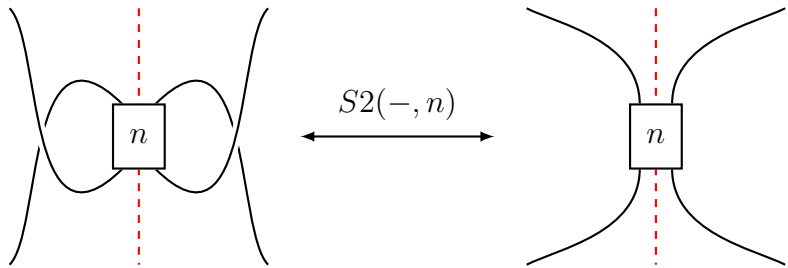

\begin{proof}
Note that the statement is obvious for $n=0,1,-1$. We describe the proof for the $S(-,n)$-move with 
$n<0$ because the other cases can be proved similarly. We are going to argue by induction on $n$, so we 
start assuming that the statement is true for $S(-,k)$-moves with $n < k \leq -1$. 
Performing a symmetric $R2$-move on the left-hand side tangle of Figure~\ref{fig:S2generale} we obtain the tangle of Figure~\ref{fig:S2A}. After an $S4(-1,n+1)$-move and some symmetric Reidemeister moves off the axis, the tangle of Figure~\ref{fig:S2A} can be modified into the tangle in Figure~\ref{fig:S2B}. By Lemma~\ref{lemma:S4mn} this means that the tangles of Figures~\ref{fig:S2A} and~\ref{fig:S2B} are obtained from each other via a sequence of $S4$-moves and Reidemeister moves off the axis. By a third Reidemeister move off the axis followed by a second Reidemeister move off the axis, the tangle of Figure~\ref{fig:S2B} can be turned into the tangle in Figure~\ref{fig:S2C}. Now we make use of the inductive hypothesis and perform an $S(-,n+1)$-move to get the tangle of Figure~\ref{fig:S2D}. Finally, a single $S2$-move leads us from Figure~\ref{fig:S2D} to the right-hand side of Figure~\ref{fig:S2generale}, concluding the proof.
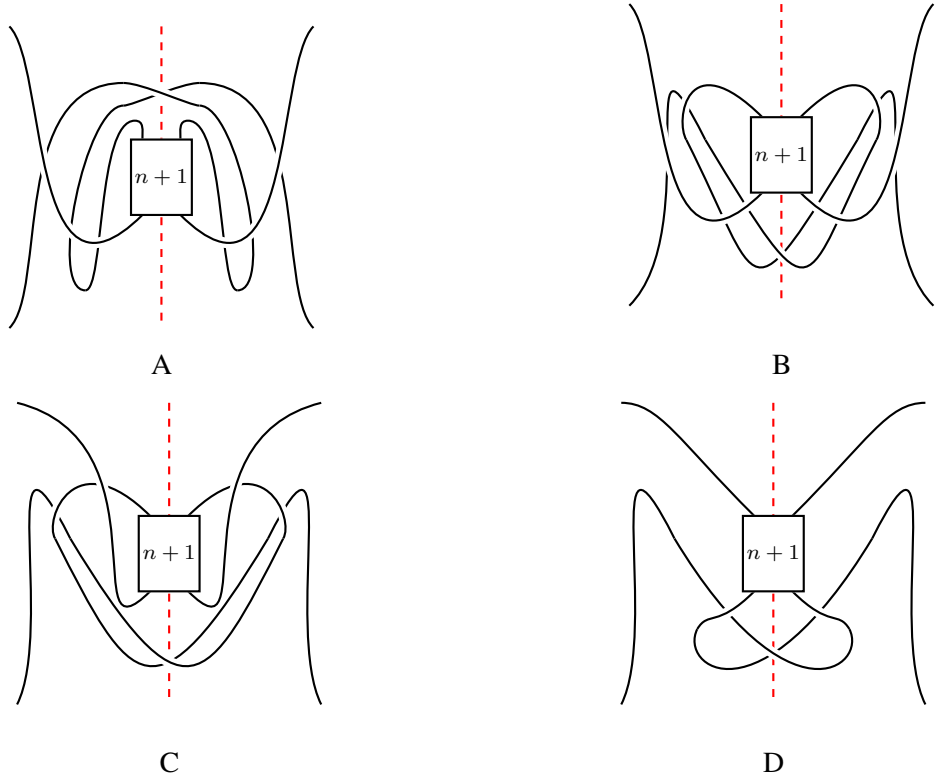
\begin{figure}[ht]
    \centering
    \begin{subfigure}[t]{0.49\textwidth}
    \centering
\begin{tikzpicture}[thick]

\draw[red,dashed] (0,2) -- (0,-2) ;

%
%
%
%
%
%

\draw (-1,-1.5) .. controls +(.25,0) and +(-.5,0) .. (-0.35,.75).. controls +(.125,0) and +(0,.125) ..(-0.25,.5);
\draw (1,-1.5) .. controls +(-.25,0) and +(.5,0) .. (0.35,.75) .. controls +(-.125,0) and +(0,.125) ..(0.25,.5);

\draw (-1,-1.5) .. controls +(-.5,0) and +(-.5,0) .. (-0.5,.95);
\draw (1,-1.5) .. controls +(.5,0) and +(.5,0) .. (0.5,.95);

\draw (.5,1.25) .. controls +(-.25,0) and +(.125,0) .. (-0.5,.95);
\pgfsetlinewidth{5*\pgflinewidth}
\draw[white] (-0.5,1.25) .. controls +(.25,0) and +(-.125,0) .. (0.5,.95);
\pgfsetlinewidth{.2*\pgflinewidth}
\draw(-0.5,1.25) .. controls +(.25,0) and +(-.125,0) .. (0.5,.95);

\draw (-2,-2) .. controls +(.5,.5) and +(-1.5,0) .. (-0.5,1.25);
\draw (2,-2) .. controls +(-.5,.5) and +(1.5,0) .. (0.5,1.25);

\pgfsetlinewidth{5*\pgflinewidth}
\draw[white] (-2,2) .. controls +(.5,-.5) and +(-1.5,-1.5) .. (-0.25,-.5);
\draw[white] (2,2) .. controls +(-.5,-.5) and +(1.5,-1.5) .. (0.25,-.5);
\pgfsetlinewidth{0.2*\pgflinewidth}

\draw (-2,2) .. controls +(.5,-.5) and +(-1.5,-1.5) .. (-0.25,-.5);
\draw (2,2) .. controls +(-.5,-.5) and +(1.5,-1.5) .. (0.25,-.5);
\draw[fill, white] (-0.4,.5) rectangle (0.4,-.5);
\draw (-0.4,.5) rectangle (0.4,-.5);

\node at (0,0) {\tiny $n+1$};
\end{tikzpicture}
    \caption{\hspace{4pt}}\label{fig:S2A}
    \end{subfigure} %
~
\begin{subfigure}[t]{0.49\textwidth}
        \centering
\begin{tikzpicture}[thick]

\draw[red,dashed] (0,2) -- (0,-2) ;

\draw (2,-2) .. controls +(-1,1) and +(1,2) .. (1,.2);
\draw (-2,-2) .. controls +(1,1) and +(-1,2) .. (-1,.2);

\draw (1,.2) .. controls +(-1.5,-2.5) and +(1,-2) ..  (-1.25,.2);

\pgfsetlinewidth{5*\pgflinewidth}
\draw[white]  (-1,.2) .. controls +(1.5,-2.5) and +(-1,-2) ..  (1.25,.2);
\pgfsetlinewidth{.2*\pgflinewidth}

\draw (-1,.2) .. controls +(1.5,-2.5) and +(-1,-2) ..  (1.25,.2);

\pgfsetlinewidth{5*\pgflinewidth}
\draw[white]  (-1.25,.2) .. controls +(-.15,.25) and +(-1,1) .. (-0.25,.5);
\draw[white] (1.25,.2) .. controls +(.15,.25) and +(1,1) .. (0.25,.5);
\pgfsetlinewidth{.2*\pgflinewidth}

\draw (-1.25,.2) .. controls +(-.15,.25) and +(-1,1) .. (-0.25,.5);
\draw (1.25,.2) .. controls +(.15,.25) and +(1,1) .. (0.25,.5);

\pgfsetlinewidth{5*\pgflinewidth}
\draw[white] (-2,2) .. controls +(.5,-.5) and +(-1.5,-1.5) .. (-0.25,-.5);
\draw[white] (2,2) .. controls +(-.5,-.5) and +(1.5,-1.5) .. (0.25,-.5);
\pgfsetlinewidth{.2*\pgflinewidth}

\draw (-2,2) .. controls +(.5,-.5) and +(-1.5,-1.5) .. (-0.25,-.5);
\draw (2,2) .. controls +(-.5,-.5) and +(1.5,-1.5) .. (0.25,-.5);
\draw[fill, white] (-0.4,.5) rectangle (0.4,-.5);
\draw (-0.4,.5) rectangle (0.4,-.5);

\node at (0,0) {\tiny $n+1$};
\end{tikzpicture}
   \caption{\hspace{4pt}}\label{fig:S2B}
    \end{subfigure} %

\begin{subfigure}[b]{0.49\textwidth}
        \centering
\begin{tikzpicture}[thick]

\draw[red,dashed] (0,2) -- (0,-2) ;

\draw (2,-2) .. controls +(-.5,1) and +(1,2) .. (1.3,.2);
\draw (-2,-2) .. controls +(.5,1) and +(-1,2) .. (-1.3,.2);

\draw (1.3,.2) .. controls +(-1.5,-2.5) and +(1,-2) ..  (-1.5,.2);

\pgfsetlinewidth{5*\pgflinewidth}
\draw[white]  (-1.3,.2) .. controls +(1.5,-2.5) and +(-1,-2) ..  (1.5,.2);
\pgfsetlinewidth{.2*\pgflinewidth}

\draw (-1.3,.2) .. controls +(1.5,-2.5) and +(-1,-2) ..  (1.5,.2);

\pgfsetlinewidth{5*\pgflinewidth}
\draw[white]  (-1.5,.2) .. controls +(-.15,.25) and +(-1,1) .. (-0.25,.5);
\draw[white] (1.5,.2) .. controls +(.15,.25) and +(1,1) .. (0.25,.5);
\pgfsetlinewidth{.2*\pgflinewidth}

\draw (-1.5,.2) .. controls +(-.15,.25) and +(-1,1) .. (-0.25,.5);
\draw (1.5,.2) .. controls +(.15,.25) and +(1,1) .. (0.25,.5);

\pgfsetlinewidth{5*\pgflinewidth}
\draw[white] (-2,2) .. controls +(2,-.5) and +(-1,-1) .. (-0.25,-.5);
\draw[white] (2,2) .. controls +(-2,-.5) and +(1,-1) .. (0.25,-.5);
\pgfsetlinewidth{.2*\pgflinewidth}

\draw (-2,2) .. controls +(2,-.5) and +(-1,-1) .. (-0.25,-.5);
\draw (2,2) .. controls +(-2,-.5) and +(1,-1) .. (0.25,-.5);

\draw[fill, white] (-0.4,.5) rectangle (0.4,-.5);
\draw (-0.4,.5) rectangle (0.4,-.5);

\node at (0,0) {\tiny $n+1$};
\end{tikzpicture}
 \caption{\hspace{4pt}}\label{fig:S2C}
\end{subfigure}%
~%
\begin{subfigure}[b]{0.49\textwidth}
\centering
\begin{tikzpicture}[thick]

\draw[red,dashed] (0,2) -- (0,-2) ;
\draw (2,-2) .. controls +(-.5,1) and +(1,2) .. (1.3,.2);
\draw (-2,-2) .. controls +(.5,1) and +(-1,2) .. (-1.3,.2);

\draw (1.3,.2) .. controls +(-1.5,-2.5) and +(-.25,-.5) .. (-1,-1);

\pgfsetlinewidth{5*\pgflinewidth}
\draw[white]   (-1.3,.2) .. controls +(1.5,-2.5) and +(.25,-.5) .. (1,-1);
\pgfsetlinewidth{.2*\pgflinewidth}

\draw (-1.3,.2) .. controls +(1.5,-2.5) and +(.25,-.5) .. (1,-1);

\pgfsetlinewidth{5*\pgflinewidth}
\draw[white] (-1,-1) .. controls +(.15,.25) and +(-.5,-.5) .. (-0.25,-.5);
\draw[white] (1,-1) .. controls +(-.15,.25) and +(.5,-.5) .. (0.25,-.5);
\pgfsetlinewidth{.2*\pgflinewidth}

\draw (-1,-1) .. controls +(.15,.25) and +(-.5,-.5) .. (-0.25,-.5);
\draw (1,-1) .. controls +(-.15,.25) and +(.5,-.5) .. (0.25,-.5);

\draw (-2,2) .. controls +(.5,0) and +(-1,1) .. (-0.25,.5);
\draw (2,2) .. controls +(-.5,0) and +(1,1) .. (0.25,.5);

\draw[fill, white] (-0.4,.5) rectangle (0.4,-.5);
\draw (-0.4,.5) rectangle (0.4,-.5);

\node at (0,0) {\tiny $n+1$};
\end{tikzpicture}
 \caption{\hspace{4pt}}\label{fig:S2D}
\end{subfigure}
    \caption{Decomposition of an $S2(\pm, n)$-move.}\label{fig:S2}
\end{figure}
\end{proof}
\begin{proof}[Proof of Theorem~\ref{t:invariance}]
We will argue that, whenever the diagrams $D$ and $D^\prime$ differ by a symmetric Reidemeister move $M$, then $D(n)$ and $D^\prime(n)$ are weakly symmetrically equivalent
if $M$ is of type $S2(v)$, and symmetrically equivalent otherwise. This is clear for symmetric Reidemeister moves off the axis and $S2(h)$-moves because they do not involve crossings on the axis. Suppose that $D^\prime$ is obtained from $D$ by applying an $S1$-move. Then, it is immediate that $D^\prime(n)$ is obtained from $D(n)$ by applying $\vert n\vert$ $S1$-moves. A similar reasoning applies to $S2(v)$-moves and $S3$-moves, and since all the verifications are very simple we leave them to the reader. If $D^\prime$ is obtained from $D$ by an $S4(\ep_1,\ep_2)$-move with $\ep_i\in\{\pm\}$, then $D^\prime(n)$ is obtained from $D(n)$ via an $S4(\epsilon_1 n, \epsilon_2 n)$-move and by Lemma~\ref{lemma:S4mn} $D(n)$ and $D^\prime(n)$ are symmetrically equivalent. Finally, if $D^\prime$ is obtained from $D$ by an $S2(\pm)$-move, then $D^\prime(n)$ is obtained from $D(n)$ via a $S2(\pm, n)$-move and by~Lemma~\ref{lemma:S2n} $D(n)$ and $D'(n)$ are symmetrically equivalent. 
\end{proof}

\section{Negative results for $W$ and the Potts-refined spin model invariants}\label{s:negative}
Our aim in this section is to show that the diagrams $D_4(h)$ and $D'_4(h)$ cannot be distinguished 
up to any symmetric equivalence using neither Eisermann and Lamm's refined Jones polynomial nor any invariant  coming from a Potts-refined topological spin model. This will be established in Corollary~\ref{c:negative}. 
We start with the following Proposition~\ref{p:W-equality}, which will be used in the proof of Corollary~\ref{c:negative}. Recall that $\vec{\D}$ denotes the set of oriented planar link diagrams $D\subset\bR^2$ transverse to the axis $\ell =\{0\}\x\bR$. Let $U^m$, for $m\geq 1$, denote any crossingless, symmetric union diagram of the $m$-component unlink. 
\begin{prop}\label{p:W-equality}
Let $R$ be a ring and $\Psi\co\vec{\D}\to R$ a map such that
\begin{enumerate}
\item 
$\Psi(D)=\Psi(D')$ if $D$ and $D'$ are weakly symmetrically equivalent;
\item 
$\Psi\left(\rightcrossaxis{}\right) = a_+ \Psi\left(\horresaxis{}\right) + b_+ \Psi\left(\vertresaxis\right)$ and 
$\Psi \left(\leftcrossaxis{}\right) = a_- \Psi\left(\horresaxis{}\right) + b_- \Psi\left(\vertresaxis\right)$, with 
\item 
$b_+ b_- = 1$ and $(a_+ b_- + a_- b_+) \Psi(U^2) + a_+ a_- \Psi(U^3) =0$.
\end{enumerate}
Then, $\Psi(D_4(h)) = \Psi(D'_4(h))$ for each $h\in\bZ$.
\end{prop}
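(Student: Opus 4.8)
The plan is to carry out, in the abstract setting of hypotheses (1)--(3), the computation by which Eisermann and Lamm~\cite{Ei.La11} established $W(D_4)=W(D'_4)$, organised so that it applies uniformly in $h$. Recall that $D_4(h)$ (respectively $D'_4(h)$) is obtained from $D_4$ (respectively $D'_4$) by replacing each axis crossing $c$ with a twist region $T_c(h)$ of $|h|$ crossings stacked along $\ell$, all of the sign prescribed by Figure~\ref{f:D(h)}.

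The first step is a twist-region lemma: if $E$ is a symmetric diagram with a distinguished axis crossing $c$, and $E(h;c)$ denotes the result of replacing $c$ by $T_c(h)$, then
\[
\Psi\big(E(h;c)\big)=\alpha_{c,h}\,\Psi\big(E^{0}_{c}\big)+\beta_{c,h}\,\Psi\big(E^{\infty}_{c}\big),
\]
where $E^{0}_{c}$ and $E^{\infty}_{c}$ are obtained from $E$ by replacing $c$ with its two smoothings — the one whose arcs cross $\ell$ and the one whose arcs do not — and $\alpha_{c,h},\beta_{c,h}\in R$ depend only on $h$, on the sign of $c$, and on $a_\pm,b_\pm$ (explicitly $\beta_{c,h}=b_\eta^{|h|}$ and $\alpha_{c,h}=a_\eta\sum_{j=0}^{|h|-1}b_\eta^{j}$, $\eta\in\{+,-\}$ the common sign of the crossings of $T_c(h)$; these recover (2) when $|h|=1$ and give $\Psi(E(0;c))=\Psi(E^\infty_c)$). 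The proof is an induction on $|h|$: one resolves the outermost crossing of $T_c(h)$ by (2); one child reproduces $E$ with $c$ replaced by a twist region of $|h|-1$ crossings, while the other is $E$ with $c$ replaced by a shorter twist region capped off by an arc, which a chain of $S1$-moves (legitimate by (1)) collapses, without creating any closed component, to $E^{0}_{c}$. Applying this at every axis crossing of $D_4$ and of $D'_4$ yields
\[
\Psi\big(D_4(h)\big)=\sum_{\varepsilon}\lambda_{\varepsilon}(h)\,\Psi\big((D_4)_{\varepsilon}\big),\qquad
\Psi\big(D'_4(h)\big)=\sum_{\delta}\lambda'_{\delta}(h)\,\Psi\big((D'_4)_{\delta}\big),
\]
where $\varepsilon$ (resp.\ $\delta$) ranges over the assignments of a smoothing to each axis crossing, the $(D_4)_{\varepsilon}$ and $(D'_4)_{\delta}$ are the resulting axis-crossing-free symmetric diagrams — which do not depend on $h$ — and $\lambda_{\varepsilon}(h),\lambda'_{\delta}(h)$ are the corresponding products of $\alpha$'s and $\beta$'s.

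The core of the argument is to prove that these two sums are equal. Using the presentations of $D_4$ and $D'_4$ as symmetric unions of the same $2$-bridge knot, I would organise the axis-crossing-free diagrams into groups: most of the $(D_4)_{\varepsilon}$ and $(D'_4)_{\delta}$ are related in pairs by symmetric Reidemeister moves off the axis together with $S2(v)$-moves, so that $\Psi$ agrees on them by (1), and the matching of the attached coefficients is governed by $b_{+}b_{-}=1$. The remaining diagrams are split; weak symmetric equivalence reduces their contributions to multiples of $\Psi(U^2)$ and $\Psi(U^3)$, and the sum of these residual terms is precisely of the form annihilated by the second relation in (3). Since the axis-crossing-free diagrams and the weak symmetric equivalences between them do not involve $h$, the entire $h$-dependence lies in the scalar coefficients, and one checks that every scalar identity used descends from the two relations in (3) in a form stable under the substitution $a_{\eta}\mapsto\alpha_{c,h}$, $b_{\eta}\mapsto\beta_{c,h}$ — e.g.\ $\beta_{c,h}\beta_{c',h}=1$ for axis crossings $c,c'$ of opposite sign, because $b_{+}b_{-}=1$.

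The step I expect to be the genuine obstacle is exactly this last bookkeeping. Setting up the dictionary between $\{(D_4)_{\varepsilon}\}$ and $\{(D'_4)_{\delta}\}$ — which choices of smoothing make a diagram collapse, which produce split diagrams, and which diagrams on the two sides are weakly symmetrically equivalent — is a finite but delicate case analysis dictated by the specific shapes of $D_4$ and $D'_4$, and is essentially Eisermann and Lamm's verification that $W(D_4)=W(D'_4)$ reinterpreted abstractly. The subtle point is to ensure that each scalar relation invoked in that verification survives the replacement of single axis crossings by twist regions, so that hypothesis (3) is used in a strength sufficient for every $h$, not merely for $h=1$.
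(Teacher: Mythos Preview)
Your plan is not wrong in spirit, but it misses the structural observation that makes the paper's proof a one-liner, and the part of your argument you flag as ``the genuine obstacle'' is in fact never carried out.

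The paper's proof rests on the identity $D'_4(h)=D_4(0,h,0)$: the diagram $D'_4(h)$ is literally $D_4(h)$ with its top and bottom twist regions replaced by the crossingless tangle. Given this, one never touches the two middle axis crossings at all. Instead one shows $\Psi(D_4(k,h,k))=\Psi(D_4(k-1,h,k-1))$ by resolving \emph{one} crossing from the top region and \emph{one} from the bottom region. The $(1,1)$-resolution is $D_4(k-1,h,k-1)$ with coefficient $b_+b_-=1$; the other three resolutions are checked to be weakly symmetrically equivalent to $U^2,U^2,U^3$, and their coefficients are exactly $a_+b_-$, $a_-b_+$, $a_+a_-$, so hypothesis~(3) kills them in one stroke. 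A downward induction on $k$ from $h$ to $0$ finishes.

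By contrast, your full-expansion strategy would require identifying all sixteen completely resolved diagrams $(D_4)_\varepsilon$ and matching them against the four $(D'_4)_\delta$. You do not do this, and your justification that ``every scalar identity used descends from~(3) under the substitution $a_\eta\mapsto\alpha_{c,h}$, $b_\eta\mapsto\beta_{c,h}$'' is not correct as stated: for $h=2$ the substituted form $(\alpha_+\beta_-+\alpha_-\beta_+)\Psi(U^2)+\alpha_+\alpha_-\Psi(U^3)=0$ does \emph{not} follow from $b_+b_-=1$ and the second relation in~(3) alone. What rescues it are further relations such as $(1-b_\eta)\Psi(U^m)=a_\eta\Psi(U^{m+1})$, which are forced by~(1) and~(2) via resolving an $S1$-kink on $U^m$, but you never invoke these. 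So the bookkeeping you defer is both more delicate than you suggest and entirely avoidable once one notices $D'_4(h)=D_4(0,h,0)$.
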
 

\begin{proof}
Clearly $D_4(0)=D^\prime_4(0)$. We only prove the statement for $h>0$ because the proof for $h<0$ is essentially the same. Let $D_4(t,h,b)$, $t,b\geq 0$, $h\geq 1$, be the diagram obtained from $D_4$ by replacing the top (respectively bottom) crossing on the axis with $t$ (respectively $b$) consecutive crossings on the axis of the same sign, and each of the other crossings on the axis with $h$ consecutive crossings on the axis of the same sign. Observe that $D_4(h,h,h) = D_4(h)$ and $D_4(0,h,0) = D'_4(h)$ for each $h\geq 1$. Therefore, it suffices to prove that $\Psi(D_4(k,h,k)) = \Psi(D_4(h))$ for each $h\geq k\geq 0$ with $h\geq 1$. This follows by an easy downward induction on $k$ starting from $k=h\geq 1$ once we show that the equality $\Psi(D_4(k,h,k)) = \Psi(D_4(k-1,h,k-1))$ holds. 
It will be convenient to use the following terminology and notation. We call a horizontal resolution of a crossing on the axis a {\em $0$-resolution} and a vertical resolution of such a crossing a {\em $1$-resolution}. We denote by $D_{x y}$, with $x,y\in\{0,1\}$, any symmetric union diagram obtained from $D_4(k,h,k)$ by an $x$-resolution of any of its $k$ top crossings on the axis and a $y$-resolution of any of its $k$ bottom crossings on the axis. It is easy to check that $D_{0 1}$ and $D_{1 0}$ are weakly symmetrically equivalent to $U^2$, $D_{0 0}$ is weakly symmetrically equivalent to $U^3$ and $D_{1 1} = D_4(k-1,h,k-1)$. A simple calculation using Assumption~(2) yields 
\[
\Psi(D_4(k,h,k)) = b_+ b_- \Psi(D_4(k-1,h,k-1)) + (a_+ b_- + a_- b_+) \Psi(U^2)  + a_+ a_- \Psi(U^3),
\]
which by (3) gives the claimed equality $\Psi(D_4(k,h,k)) = \Psi(D_4(k-1,h,k-1))$. 
\end{proof}

\begin{cor}\label{c:negative}
For any $h\in\bZ$, the refined Jones polynomial and any Potts-refined spin model invariant take the same values on $D_4(h)$ and $D'_4(h)$. 
\end{cor}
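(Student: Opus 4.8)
The plan is to obtain Corollary~\ref{c:negative} as a twofold application of Proposition~\ref{p:W-equality}: once with $\Psi=W$, the refined Jones polynomial, and once with $\Psi=I_{\widehat M}$, the normalized partition function of an arbitrary Potts-refined spin model $\widehat M=(W^+,V^+,d)$. For each of these maps one checks the three hypotheses of the proposition; the equalities $W(D_4(h))=W(D'_4(h))$ and $I_{\widehat M}(D_4(h))=I_{\widehat M}(D'_4(h))$, $h\in\bZ$, then follow at once.

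For $\Psi=W$ there is essentially nothing to do. Hypothesis~(1) is Eisermann and Lamm's invariance of $W$ under weak symmetric equivalence. Hypothesis~(2) is exactly the pair of recursions~\eqref{e:recursionright}--\eqref{e:recursionleft}, which gives $a_+=-s^{-1/2}$, $b_+=-s^{-1}$, $a_-=-s^{1/2}$, $b_-=-s$. For hypothesis~(3): clearly $b_+b_-=1$; and combining~\eqref{e:nocrossings} with the value of the Jones polynomial on the $m$-component unlink yields $W(U^m)=\bigl(-(s^{1/2}+s^{-1/2})\bigr)^{m-1}$, so that $(a_+b_-+a_-b_+)W(U^2)+a_+a_-W(U^3)=(s^{1/2}+s^{-1/2})\bigl(-(s^{1/2}+s^{-1/2})\bigr)+(s^{1/2}+s^{-1/2})^2=0$.

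For $\Psi=I_{\widehat M}$ the only hypothesis demanding actual work is~(2); hypothesis~(1) is the invariance statement~\eqref{e:se-invariance}. To establish~(2) I would begin with the rewritings $V^+=\xi J+d\,\xi^{-1}I$ and $V^-=\xi^{-1}J+d\,\xi I$, immediate from Definition~\ref{d:refined-spin-model} because $\xi^{2}+\xi^{-2}=-d$. Substituting this expansion for the factor $V^{s(e)}(\si(v_e),\si(w_e))$ of an axis edge $e\in\Ga^1_\ell$ in the sum defining $Z_{\widehat M}$, the $J$-term contributes $\xi^{\pm1}$ times $Z_{\widehat M}$ of the diagram in which that crossing has been replaced by the resolution deleting $e$, while the $I$-term, after the ensuing drop $N\mapsto N-1$ in the number of black regions is absorbed into the power $d^{-N}$, contributes $\xi^{\mp1}$ times $Z_{\widehat M}$ of the diagram in which the crossing has been replaced by the resolution contracting $e$. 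Multiplying through by the normalizing factor $(-\xi)^{p_\ell-n_\ell}$, which changes by $(-\xi)^{\mp1}$ when one crossing on the axis is resolved, produces skein relations of the form required by hypothesis~(2), with $\{a_\pm,b_\pm\}=\{-1,-\xi^{\pm2}\}$; the precise ordering within each pair, and the correspondence of $\rightcrossaxis,\leftcrossaxis$ to positive and negative crossings and of $\horresaxis,\vertresaxis$ to the two resolutions, is dictated by the conventions of~\cite{CL} but is irrelevant below, since in every case $b_+b_-=1$, $a_+b_-+a_-b_+=\xi^2+\xi^{-2}=-d$ and $a_+a_-=1$. Hypothesis~(3) is then a single line: the medial graph of a crossingless unknot is a single vertex with no edges, so $I_{\widehat M}(U^1)=d^{-1}n=d$ (recall $n=d^2$), whence by multiplicativity over connected components $I_{\widehat M}(U^m)=d^m$ and $(a_+b_-+a_-b_+)I_{\widehat M}(U^2)+a_+a_-I_{\widehat M}(U^3)=-d\cdot d^{2}+d^{3}=0$.

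The one delicate point is the normalization bookkeeping in hypothesis~(2) for $I_{\widehat M}$: one must track exactly how $N$, $p_\ell$ and $n_\ell$ change under each of the two resolutions of a crossing on the axis, and hence the surviving powers of $d$ and of $-\xi$. This is a short finite check, closely parallel to the invariance computations already in~\cite{CL}; with the constants $a_\pm,b_\pm$ in hand, everything else is routine.
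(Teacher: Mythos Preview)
Your approach is exactly the paper's: apply Proposition~\ref{p:W-equality} first to $W$ and then to $I_{\widehat M}$, checking hypotheses (1)--(3) in each case. Your treatment of $W$ matches the paper verbatim, and for $I_{\widehat M}$ you compute $I_{\widehat M}(U^m)=d^m$ and derive a Kauffman-type skein relation on the axis from the decomposition $V^{\pm}=\xi^{\pm1}J+d\,\xi^{\mp1}I$, just as the paper does.

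Two small points are worth tightening. First, the paper obtains the explicit coefficients $(a_+,b_+,a_-,b_-)=(-\xi^{-2},-\xi^{-4},-\xi^{2},-\xi^{4})$, whereas your bookkeeping yields the pair $\{-1,-\xi^{\pm2}\}$; the discrepancy comes from the step you flag as ``delicate'', namely tracking simultaneously the edge sign $s(e)$ (which governs $V^{\pm}$) and the oriented crossing sign (which governs $p_\ell-n_\ell$). You should not assume these coincide. Second, your claim that ``in every case $b_+b_-=1$'' is not literally true for an arbitrary pairing of $\{-1,-\xi^{2}\}$ with $\{-1,-\xi^{-2}\}$: it holds only because $\horresaxis$ and $\vertresaxis$ are the \emph{same} resolutions for both $\rightcrossaxis$ and $\leftcrossaxis$, so that $b_+$ and $b_-$ necessarily come from the same ($J$- or $I$-) term. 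Once you make that observation explicit, your values do satisfy $b_+b_-=1$, $a_+a_-=1$, $a_+b_-+a_-b_+=-d$, and hypothesis~(3) follows exactly as you wrote.
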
 

\begin{proof} 
Let $W$ be the refined Jones polynomial from~\cite{Ei.La11}. We recalled in Subsection~\ref{ss:rJp} that $W$ satisfies Assumption~$(1)$ of Proposition~\ref{p:W-equality}. By Equation~\eqref{e:nocrossings}, since 
$V_{U^m}(t) = (-t^{1/2}-t^{-1/2})^{m-1}$  we obtain $W(U^m) = (-s^{1/2}-s^{-1/2})^{m-1}$. Together with Equations~\eqref{e:recursionright} and~\eqref{e:recursionleft} this immediately implies that $W$ satisfies Assumptions~$(2)$ and~$(3)$ of Proposition~\ref{p:W-equality} and therefore $W(D_4(h)) = W(D'_4(h))$ 
for every $h\in\bZ$. 

Let $\widehat{M} = (W^+, V^+, d)$ be a Potts-refined spin model. By Equation~\ref{e:se-invariance} we know that $I_{\widehat{M}}$ satisfies Assumption~$(1)$ of Proposition~\ref{p:W-equality}, and it follows immediately from the definition that $I_{\widehat{M}}(U^m) = d^m$. Using that $V^+ =  (-\xi^{-3} - \xi) I + \xi J = d\xi^{-1} I + \xi J$, $V^- =  -\xi^3 I + \xi^{-1} (J-I) = (-\xi^3-\xi^{-1}) I + \xi^{-1} J = d\xi I + \xi^{-1} J$ and $d=-\xi^{-2}-\xi^2$, it is straightforward to check that 
\[
I_{\widehat{M}} \left(\rightcrossaxis{}\right) = -\xi^{-2} I_{\widehat{M}} \left(\horresaxis{}\right) - \xi^{-4} I_{\widehat{M}} \left(\vertresaxis
\right)\ \text{and} \ 
I_{\widehat{M}} \left(\leftcrossaxis{}\right) = -\xi^2 I_{\widehat{M}} \left(\horresaxis{}\right) - \xi^4 I_{\widehat{M}} \left(\vertresaxis\right).
\]
Thus, $I_{\widehat{M}}$ satisfies Assumptions~$(2)$ and~$(3)$ of Proposition~\ref{p:W-equality} and $I_{\widehat{M}} (D_4(h)) = I_{\widehat{M}}(D'_4(h))$ for every $h\in\bZ$.
\end{proof}

\section{Proof of Theorem~\ref{t:main}}\label{s:applications} 
In this section we show that the diagrams $D_4(2)$ and $D'_4(2)$ represent distinct knots $K$ and $K'$. Since this implies that $D_4(2)$ and $D'_4(2)$ are not Reidemeister equivalent, combining this fact with Theorem~\ref{t:invariance} yields Theorem~\ref{t:main}.

As one referee pointed out to us, to show that $K$ and $K'$ are distinct it is possible to use both the Kauffmann polynomial and the colored Jones polynomial. Another possibility is to show that $K$ and $K'$ have different second Alexander ideals. In a previous version of this paper~\footnote{https://arxiv.org/abs/1901.10270v2} we worked out the details of the computation of the second Alexander ideals for the infinite families of knots $\{K_s\}$ and $\{K'_s\}$ given by the diagrams $D_{4s}(2)$ and $D'_{4s}(2)$, $s\geq 1$. As it turns out, the second Alexander ideals distinguish $K_s$ from $K'_s$ for each $s\geq 1$. In this paper we just prove that $K$ and $K'$ are distinct using their third cyclic branched covers.

\begin{lemma}\label{l:tnumbs}
Let $\Si = \Si_3(K)$ and $\Si' = \Si_3(K')$ denote the three-fold branched covers of $K$ and $K'$, respectively. Then, we have the following isomorphisms of Abelian groups:
\[
H_1(\Si;\bZ)\cong\bZ/7\bZ\oplus\bZ/7\bZ\oplus\bZ/7\bZ\oplus\bZ/7\bZ
\quad\text{and}\quad H_1(\Si';\bZ)\cong\bZ/49\bZ\oplus\bZ/49\bZ.
\]
\end{lemma}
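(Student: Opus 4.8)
The plan is to compute $H_1(\Si;\bZ)$ and $H_1(\Si';\bZ)$ directly from Seifert matrices of $K$ and $K'$ and then to read off the group structure via Smith normal forms. The first step is to produce the diagrams $D_4(2)$ and $D_4'(2)$ — obtained from the diagrams of Figure~\ref{f:1042} by replacing each of the four crossings on the axis by two consecutive crossings of the same sign, as in Figure~\ref{f:D(h)} — and to extract Seifert matrices $V$ and $V'$ by applying Seifert's algorithm. Here it is convenient to use a Seifert surface adapted to the symmetric union structure: taking a Seifert surface for the common partial knot on one side of the axis, its mirror on the other side, and joining them by bands across the axis, one obtains a block Seifert matrix built out of a Seifert matrix of the partial knot together with the linking data of the axis bands. (Alternatively one can read a Wirtinger presentation off each diagram and use Fox calculus; I prefer the Seifert matrix route since it feeds directly into the next step.)

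Next I would invoke the classical description of the homology of the cyclic branched covers in terms of the Seifert form: if $L$ is a knot with Seifert matrix $W$ of size $2g\times 2g$, then $H_1(\Si_3(L);\bZ)$ is isomorphic to the cokernel of the $4g\times 4g$ integer matrix
\[
M(W)=\begin{pmatrix} W+W^{T} & W\\ W^{T} & W+W^{T}\end{pmatrix},
\]
which arises by tensoring the Alexander module $\bZ[t,t^{-1}]^{2g}/(tW-W^{T})$ over $\bZ[t,t^{-1}]$ with $\bZ[t,t^{-1}]/(1+t+t^{2})$ and rewriting the outcome in the $\bZ$-basis $\{1,t\}$ of that quotient ring (cf.\ \cite{Li}). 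In particular $|H_1(\Si_3(L);\bZ)|=|\det M(W)|=|\Delta_L(\om)|^{2}$ with $\om=e^{2\pi i/3}$. Applying this with $W=V$ and $W=V'$ reduces the lemma to two finite linear-algebra computations.

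Finally I would compute the Smith normal forms of $M(V)$ and $M(V')$. The expected outcome is that both determinants equal $7^{4}=49^{2}=2401$ — which is precisely why $D_4(2)$ and $D_4'(2)$ cannot be distinguished by the order of $H_1(\Si_3)$, nor, in view of Corollary~\ref{c:negative}, by the refined Jones polynomial or by any Potts-refined spin model invariant — but that the invariant factors differ: $M(V)$ has all four of its nontrivial invariant factors equal to $7$, while $M(V')$ has just two nontrivial invariant factors, each equal to $49$. In practice it is enough to check that the reductions of $M(V)$ and $M(V')$ modulo $7$ have nullity $4$ and $2$, respectively, which is a short computation over $\bF_7$. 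This yields $H_1(\Si;\bZ)\cong(\bZ/7\bZ)^{\oplus 4}$ and $H_1(\Si';\bZ)\cong(\bZ/49\bZ)^{\oplus 2}$; in particular $\Si$ and $\Si'$ are not even homeomorphic, so $K\ne K'$, which is what is needed for Theorem~\ref{t:main}. The main obstacle is bookkeeping: obtaining correct Seifert matrices from the moderately large diagrams $D_4(2)$ and $D_4'(2)$, and then executing the Smith normal form (or mod-$7$ rank) computation without errors.
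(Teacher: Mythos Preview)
Your approach is essentially the same as the paper's: compute explicit Seifert matrices for $K$ and $K'$, build from them a presentation matrix for $H_1(\Si_3;\bZ)$, and read off the elementary divisors. The paper carries this out by exhibiting concrete Seifert surfaces (a $10\times 10$ matrix $V$ for $D_4(2)$ and an $8\times 8$ matrix $V'$ for $D_4'(2)$) and then invokes Seifert's classical formula $\Ga_V^3-(\Ga_V-I)^3$ with $\Ga_V=-V(\transp{V}-V)^{-1}$ for the presentation matrix, rather than your block matrix $M(W)$; the two formulations are equivalent over~$\bZ$.

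Your mod-$7$ nullity shortcut is a pleasant addition the paper does not mention. What your proposal lacks, and what the paper supplies, is the actual Seifert matrices: without writing them down (or otherwise pinning down the Seifert form) nothing has been proved, and your own closing sentence acknowledges this is where the work lies.
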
 

\begin{proof} 
We start by computing Seifert matrices for $K$ and $K'$. Consider the Seifert surface $\Sigma$ for $D_4(2)$ and the basis for its first homology group shown in Figure \ref{figure:basis_car}. The generators are divided into two groups, each of which is shown separately in Figure~\ref{figure:basis_car} to maximize readability. 
\begin{figure}[ht]
	\centering
	\input{GenDprime4_full}
	\caption{A Seifert surface $\Si$ for $D_4(2)$ and a basis of $H_1(\Si;\bZ)$}
	\label{figure:basis_car}
\end{figure}
It is straightforward to check that the associated Seifert matrix is 
\[
V = \sm{0 & 1 & 0 & 0 & 0 & 0 & 0 & 0 & 0 & -1 \\
0 & -1 & 0 & 0 & 0 & 0 & 0 & 0 & 0 & 0 \\
0 & 1 & 0 & 0 & 0 & 0 & 0 & 0 & 0 & 0 \\
0 & 0 & 1 & 1 & 1 & 0 & 0 & 0 & 0 & 0 \\
0 & 0 & 0 & 0 & 0 & 1 & 0 & 0 & 0 & -1 \\
0 & 0 & 0 & 0 & 0 & -1 & 0 & 0 & 0 & 0 \\
0 & 0 & 0 & 0 & 0 & 1 & 0 & 0 & 0 & 0 \\
0 & 0 & 0 & 0 & 0 & 0 & 1 & 2 & 1 & 0 \\
0 & 0 & 0 & 0 & 0 & 0 & 1 & 1 & 0 & 1 \\
0 & 0 & 1 & 0 & 0 & 0 & 1 & 0 & 0 & 2}.
\]
Next, we consider a Seifert surface  $\Si'$ for $D'_4(2)$ and the basis for 
$H_1(\Si',\bZ)$ illustrated in Figure \ref{figure:basis_ors}. As before, the generators are divided into two groups, which are shown separately.
\begin{figure}[ht]
	\centering
	\input{GenD4_full}
	\caption{A Seifert surface $\Si'$ for $D'_4(2)$ and a basis of $H_1(\Si',\bZ)$}
	\label{figure:basis_ors}
\end{figure}
It is easy to check that the corresponding Seifert matrix $V'$ is given by
\[
V' = \sm{
0 & 0 & 0 & 0 & 0 & 0 & 0 & 1 \\
1 & 1 & -1 & 0 & 0 & 0 & 0 & 0 \\
0 & 0 & 0 & 1 & 0 & 0 & 0 & 1 \\
0 & 0 & 0 & -1 & 0 & 0 & 0 & 0 \\
0 & 0 & 0 & -1 & 0 & 0 & 0 & 0 \\
0 & 0 & 0 & 0 & -1 & 1 & 0 & 0 \\
0 & 0 & 0 & 0 & 1 & 0 & -1 & 1 \\
1 & 0 & 0 & 0 & 1 & 0 & 0 & 2
}
\]
Given a Seifert matrix $V$ for a knot $K$, a presentation matrix for $H_1(\Si_3(K);\bZ)$ is given by $\Ga_V^3-(\Ga_V-I)^3$, where $\Ga_V = -V({}^t V - V)^{-1}$ and $I$ is the identity matrix~\cite[Satz I]{Se35} (see also~\cite[Theorem~3]{Tr62}). The statement follows by computing the elementary divisors of the matrices $\Ga_V$ and $\Ga_{V'}$. 
\end{proof}

\begin{proof}[Proof of Theorem~\ref{t:main}]
The statement is an immediate consequence of Lemma~\ref{l:tnumbs} and Theorem~\ref{t:invariance}.
\end{proof}

\bibliographystyle{abbrv}
\bibliography{biblio}
\end{document}